\date{\today}
\newtheorem{theorem}{Теорема}
\newtheorem{proposition}{Твердження}
\newtheorem{corollary}{Наслiдок}
\newtheorem{lemma}{Лема}
\theoremstyle{definition}
\newtheorem{example}{Приклад}
\newtheorem{remark}{Зауваження}
\newtheorem{definition}[theorem]{Означення}
\begin{document}

\title[Про груповi конгруенцi\"{\i} на напiвгрупi $\boldsymbol{B}_{\omega}^{\mathscr{F}}$ та \"{\i}\"{\i} гомоморфнi ретракти ...]{Про груповi конгруенцi\"{\i} на напiвгрупi $\boldsymbol{B}_{\omega}^{\mathscr{F}}$ та \"{\i}\"{\i} гомоморфнi ретракти у випадку, коли сiм'я $\mathscr{F}$ складається з непорожнiх iндуктивних пiдмножин у  ${\omega}$}

\author[Олег~Гутік, Микола Михаленич]{Олег~Гутік, Микола Михаленич}
\address{Механіко-математичний факультет, Львівський національний університет ім. Івана Франка, Університецька 1, Львів, 79000, Україна}
\email{oleg.gutik@lnu.edu.ua,
ovgutik@yahoo.com, myhalenychmc@gmail.com}

\keywords{Інверсна напівгрупа, біциклічна напівгрупа, групова конгруенція, гомоморфний ретракт, ізоморфізм, автоморфізм}

\subjclass[2020]{20M15,  20M50, 18B40.}

\begin{abstract}
Ми вивчаємо групові конгруенції на напівгрупі $\boldsymbol{B}_{\omega}^{\mathscr{F}}$ та її гомоморфні ретракти у випадку, коли $\mathscr{F}$ --- ${\omega}$-замкнена сім'я  з індуктивних непорожніх підмножин в $\omega$. Доведено, що конгруенція $\mathfrak{C}$ на $\boldsymbol{B}_{\omega}^{\mathscr{F}}$ є груповою, тоді і лише тоді, коли звуження конгруенції $\mathfrak{C}$ на піднапівгрупу в $\boldsymbol{B}_{\omega}^{\mathscr{F}}$, яка ізоморфна біциклічній напівгруп, не є відношенням рівності. Також, ми описуємо всі нетривіальні гомоморфні ретракти та ізоморфізми напівгрупи $\boldsymbol{B}_{\omega}^{\mathscr{F}}$.

\bigskip
\noindent
\emph{Oleg Gutik, Mykola Mykhalenych, \textbf{On group congruences on the semigroup $\boldsymbol{B}_{\omega}^{\mathscr{F}}$ and its homo\-mor\-phic retracts in the case when a family $\mathscr{F}$ consists of inductive non-empty subsets of~$\omega$}.}

\smallskip
\noindent
We study group congruences on the semigroup  $\boldsymbol{B}_{\omega}^{\mathscr{F}}$ and its homomorphic retracts  in the case when an ${\omega}$-closed family $\mathscr{F}$ which consists of inductive non-empty subsets of $\omega$. It is proven that a congruence $\mathfrak{C}$ on $\boldsymbol{B}_{\omega}^{\mathscr{F}}$ is a group congruence if and only if its restriction on a subsemigroup of $\boldsymbol{B}_{\omega}^{\mathscr{F}}$, which is isomorphic to the bicyclic semigroup, is not the identity relation. Also, all non-trivial homomorphic retracts and isomorphisms  of the semigroup $\boldsymbol{B}_{\omega}^{\mathscr{F}}$ are described.
\end{abstract}

\maketitle


\section{Вступ}\label{section-1}

У цій праці ми користуємося термінологією з монографій \cite{Clifford-Preston-1961, Clifford-Preston-1967, Lawson-1998, Petrich-1984}.
Надалі у тексті множину невід'ємних цілих чисел  позначатимемо через $\omega$. Для довільного числа $k\in\omega$ позначимо
${[k)=\{i\in\omega\colon i\geqslant k\}.}$

Нехай $\mathscr{P}(\omega)$~--- сім'я усіх підмножин у $\omega$.
Для довільних $F\in\mathscr{P}(\omega)$ та $n,m\in\omega$ покладемо $n-m+F=\{n-m+k\colon k\in F\}$.
Будемо говорити, що непорожня підсім'я  $\mathscr{F}\subseteq\mathscr{P}(\omega)$ є \emph{${\omega}$-замкненою}, якщо $F_1\cap(-n+F_2)\in\mathscr{F}$ для довільних $n\in\omega$ та $F_1,F_2\in\mathscr{F}$.

Підмножина $A$ в $\omega$ називається \emph{індуктивною}, якщо з  $i\in A$ випливає, що $i+1\in A$. Очевидно, що $\varnothing$ --- індуктивна множина в $\omega$.

\begin{remark}\label{remark-2.2}
\begin{enumerate}
  \item\label{remark-2.2(1)} За лемою 6 з \cite{Gutik-Mykhalenych-2020} непорожня множина $F\subseteq \omega$ є індуктивною в $\omega$ тоді і лише тоді, коли $(-1+F)\cap F=F$.
  \item\label{remark-2.2(2)} Оскільки множина $\omega$ зі звичайним порядком є цілком впорядкованою, то для кожної непорожньої індуктивної множини $F$ у $\omega$ існує невід'ємне ціле число $n_F\in\omega$ таке, що $[n_F)=F$.
  \item\label{remark-2.2(3)} З \eqref{remark-2.2(2)} випливає, що перетин довільної скінченної кількості непорожніх індуктивних підмножин у $\omega$ є непорожньою індуктивною підмножиною в $\omega$. 
\end{enumerate}
\end{remark}

Якщо $S$~--- напівгрупа, то її підмножина ідемпотентів позначається через $E(S)$.  На\-пів\-гру\-па $S$ називається \emph{інверсною}, якщо для довільного її елемента $x$ існує єдиний елемент $x^{-1}\in S$ такий, що $xx^{-1}x=x$ та $x^{-1}xx^{-1}=x^{-1}$ \cite{Petrich-1984, Vagner-1952}. В інверсній напівгрупі $S$ вище означений елемент $x^{-1}$ називається \emph{інверсним до} $x$. \emph{В'язка}~--- це напівгрупа ідемпотентів, а \emph{напівґратка}~--- це комутативна в'язка. 


Відношення еквівалентності $\mathfrak{K}$ на напівгрупі $S$ називається \emph{конгруенцією}, якщо для елементів $a$ та $b$ напівгрупи $S$ з того, що виконується умова $(a,b)\in\mathfrak{K}$ випливає, що $(ca,cb), (ad,bd) \in\mathfrak{K}$, для довільних $c,d\in S$. Відношення $(a,b)\in\mathfrak{K}$ ми також будемо записувати $a\mathfrak{K}b$, і в цьому випадку будемо говорити, що \emph{елементи $a$ i $b$ є $\mathfrak{K}$-еквівалентними}.

Конгруенція $\mathfrak{K}$ на напівгрупі $S$ називається \emph{груповою}, якщо фактор-напівгрупа $S/\mathfrak{K}$ ізоморфна деякій групі $G$ \cite{Clifford-Preston-1961}. Нагадаємо \cite{Lawson-1998, Petrich-1984}, що на кожній ін\-версній напівгрупі $S$ існує \emph{найменша} (\emph{мінімальна}) \emph{групова конгруенція} $\boldsymbol{\sigma}$ і вона визначається так:
\begin{equation*}
  s\boldsymbol{\sigma}t \qquad \Longleftrightarrow \qquad es=et \quad \hbox{для деякого} \quad e\in E(S).
\end{equation*}

Якщо $S$~--- напівгрупа, то на $E(S)$ визначено частковий порядок:
$
e\preccurlyeq f
$   тоді і лише тоді, коли
$ef=fe=e$.
Так означений частковий порядок на $E(S)$ називається \emph{при\-род\-ним}.

Означимо відношення $\preccurlyeq$ на інверсній напівгрупі $S$ так:
$
    s\preccurlyeq t
$
тоді і лише тоді, коли $s=te$, для деякого ідемпотента $e\in S$. Так означений частковий порядок назива\-єть\-ся \emph{при\-род\-ним част\-ковим порядком} на інверсній напівгрупі $S$~\cite{Vagner-1952}. Очевидно, що звуження природного часткового порядку $\preccurlyeq$ на інверсній напівгрупі $S$ на її в'язку $E(S)$ є при\-род\-ним частковим порядком на $E(S)$.

Нагадаємо (див.  \cite[\S1.12]{Clifford-Preston-1961}, що \emph{біциклічною напівгрупою} (або \emph{біциклічним моноїдом}) ${\mathscr{C}}(p,q)$ називається напівгрупа з одиницею, породжена двоелементною мно\-жи\-ною $\{p,q\}$ і визначена одним  співвідношенням $pq=1$. Біциклічна на\-пів\-група відіграє важливу роль у теорії
на\-півгруп. Так, зокрема, класична теорема О.~Ан\-дерсена \cite{Andersen-1952}  стверджує, що {($0$-)}прос\-та напівгрупа з (ненульовим) ідем\-по\-тен\-том є цілком {($0$-)}прос\-тою тоді і лише тоді, коли вона не містить ізоморфну копію бі\-циклічного моноїда. Різні розширення та узагальнення біциклічного моноїда вводилися раніше різ\-ни\-ми авторами \cite{Fortunatov-1976, Fotedar-1974, Fotedar-1978, Gutik-Pagon-Pavlyk=2011, Warne-1967}. Такими, зокрема, є конструкції Брука та Брука--Рейлі занурення напівгруп у прості та описання інверсних біпростих і $0$-біпростих $\omega$-напівгруп \cite{Bruck-1958, Reilly-1966, Warne-1966, Gutik-2018}.

\emph{Гомоморфною ретракцією} називається відображення з напівгрупи $S$ в $S$, яке є одночасно рет\-рак\-цією та гомоморфізмом \cite{Clifford-Preston-1961, Clifford-Preston-1967}. Образ напівгрупи $S$ при її гомоморфній ретракції називається \emph{гомоморфним рет\-рак\-том}. Тобто гомоморфний ретракт напівгрупи $S$~--- це така піднапівгрупа $T$ в $S$, що існує гомоморфізм з $S$ на $T$, для якого піднапівгрупа $T$ є множиною всіх його нерухомих точок. Терміни ``гомоморфні ретракції'' та ``гомоморфні ретракти'', здається, вперше з’явились у праці Брауна \cite{Brown-1965} при дослідженні структури топологічних напівґраток. Очевидно, що кожне тотожне відображення напівгрупи $S$ є її гомоморфною ретракцією, а також, якщо $e$~--- ідемпотент в $S$, то стале відображення $h\colon S\to S$, $x\mapsto e$ є гомоморфною рет\-рак\-цією напівгрупи $S$. Такі гомоморфні ретракції та тотожне відображення напівгрупи $S$ будемо називати \emph{тривіальними}, а  образи напівгрупи $S$ стосовно них --- \emph{тривіальними} гомоморфними ретрактами.

\begin{remark}\label{remark-10}
Легко бачити, що біциклічний моноїд ${\mathscr{C}}(p,q)$ ізоморфний напівгрупі, заданій на множині $\boldsymbol{B}_{\omega}=\omega\times\omega$ з напівгруповою операцією
\begin{equation*}
  (i_1,j_1)\cdot(i_2,j_2)=(i_1+i_2-\min\{j_1,i_2\},j_1+j_2-\min\{j_1,i_2\})=
\left\{
  \begin{array}{ll}
    (i_1-j_1+i_2,j_2), & \hbox{якщо~} j_1\leqslant i_2;\\
    (i_1,j_1-i_2+j_2), & \hbox{якщо~} j_1\geqslant i_2.
  \end{array}
\right.
\end{equation*}
\end{remark}

У праці \cite{Gutik-Mykhalenych-2020} введено алгебраїчні розширення $\boldsymbol{B}_{\omega}^{\mathscr{F}}$ біциклічного моноїда для довільної $\omega$-замк\-не\-ної сім'ї $\mathscr{F}$ підмножин в $\omega$, які узагальнюють біциклічний моноїд, зліченну напівгрупу матричних одиниць і деякі інші комбінаторні інверсні напівгрупи. 

Нагадаємо цю конструкцію.
Нехай $\boldsymbol{B}_{\omega}$~--- біциклічний моноїд і  $\mathscr{F}$ --- непорожня ${\omega}$-замкнена підсім'я в  $\mathscr{P}(\omega)$. На множині $\boldsymbol{B}_{\omega}\times\mathscr{F}$ озна\-чимо бінарну операцію ``$\cdot$''  формулою
\begin{equation*}
  (i_1,j_1,F_1)\cdot(i_2,j_2,F_2)=
  \left\{
    \begin{array}{ll}
      (i_1-j_1+i_2,j_2,(j_1-i_2+F_1)\cap F_2), & \hbox{якщо~} j_1<i_2;\\
      (i_1,j_2,F_1\cap F_2),                   & \hbox{якщо~} j_1=i_2;\\
      (i_1,j_1-i_2+j_2,F_1\cap (i_2-j_1+F_2)), & \hbox{якщо~} j_1>i_2.
    \end{array}
  \right.
\end{equation*}

У \cite{Gutik-Mykhalenych-2020} доведено, якщо сім'я  $\mathscr{F}\subseteq\mathscr{P}(\omega)$ є ${\omega}$-замкненою, то $(\boldsymbol{B}_{\omega}\times\mathscr{F},\cdot)$ є напівгрупою.

У \cite{Gutik-Mykhalenych-2020} доведено, що $\boldsymbol{B}_{\omega}^{\mathscr{F}}$ є комбінаторною інверсною напівгрупою, а також описано відношення Ґріна, частковий природний порядок на напівгрупі $\boldsymbol{B}_{\omega}^{\mathscr{F}}$ та її множину ідемпотентів. Також, у \cite{Gutik-Mykhalenych-2020} доведено критерії  простоти, $0$-простоти, біпростоти та $0$-біпростоти напівгрупи $\boldsymbol{B}_{\omega}^{\mathscr{F}}$, і вказано умови, коли $\boldsymbol{B}_{\omega}^{\mathscr{F}}$ містить одиницю, ізоморфна біциклічному моноїду або зліченній напівгрупі матричних одиниць.

Зауважимо, що у \cite{Gutik-Pozdnyakova-2021??} отримано подібні результати до \cite{Gutik-Mykhalenych-2020} у випадку розширення $\boldsymbol{B}_{\mathbb{Z}}^{\mathscr{F}}$ розширеної біциклічної напівгрупи $\boldsymbol{B}_{\mathbb{Z}}$ для довільної $\omega$-замк\-не\-ної сім'ї $\mathscr{F}$ підмножин в $\omega$.

Припустимо, що ${\omega}$-замкнена сім'я $\mathscr{F}\subseteq\mathscr{P}(\omega)$ містить порожню множину $\varnothing$, то з означення напівгрупової операції $(\boldsymbol{B}_{\omega}\times\mathscr{F},\cdot)$ випливає, що множина
$ 
  \boldsymbol{I}=\{(i,j,\varnothing)\colon i,j\in\omega\}
$ 
є ідеалом напівгрупи $(\boldsymbol{B}_{\omega}\times\mathscr{F},\cdot)$.

\begin{definition}[\!\!{\cite{Gutik-Mykhalenych-2020}}]\label{definition-1.1}
Для довільної ${\omega}$-замкненої сім'ї $\mathscr{F}\subseteq\mathscr{P}(\omega)$ означимо
\begin{equation*}
  \boldsymbol{B}_{\omega}^{\mathscr{F}}=
\left\{
  \begin{array}{ll}
    (\boldsymbol{B}_{\omega}\times\mathscr{F},\cdot)/\boldsymbol{I}, & \hbox{якщо~} \varnothing\in\mathscr{F};\\
    (\boldsymbol{B}_{\omega}\times\mathscr{F},\cdot), & \hbox{якщо~} \varnothing\notin\mathscr{F}.
  \end{array}
\right.
\end{equation*}
\end{definition}

У \cite{Gutik-Lysetska=2021, Lysetska=2020} досліджено алгебраїчну структуру напівгрупи $\boldsymbol{B}_{\omega}^{\mathscr{F}}$ у випадку, коли ${\omega}$-замкнена сім'я $\mathscr{F}$ складається з атомарних підмножин (одноточкових підмножин і порожньої множини) в ${\omega}$. Зокрема доведено, що за виконання таких умов на сім'ю $\mathscr{F}$ напівгрупа  $\boldsymbol{B}_{\omega}^{\mathscr{F}}$ ізоморфна піднапівгрупі ${\omega}$-розширення Брандта напівгратки $(\omega,\min)$. Також, у \cite{Gutik-Lysetska=2021, Lysetska=2020} досліджувались топологізація напівгрупи $\boldsymbol{B}_{\omega}^{\mathscr{F}}$, близькі до компактних трансляційно неперервні топології на $\boldsymbol{B}_{\omega}^{\mathscr{F}}$ та замикання напівгрупи $\boldsymbol{B}_{\omega}^{\mathscr{F}}$ у напівтопологічних напівгрупах.

Із зауваження~\ref{remark-2.2}\eqref{remark-2.2(3)} випливає, якщо сім'я $\mathscr{F}_0$ склада\-єть\-ся з індуктивних в $\omega$ підмножин і міс\-тить порожню множину $\varnothing$ як елемент, то для сім'ї $\mathscr{F}=\mathscr{F}_0\setminus\{\varnothing\}$ множина  $\boldsymbol{B}_{\omega}^{\mathscr{F}}$ з індукованою напівгруповою операцією з  $\boldsymbol{B}_{\omega}^{\mathscr{F}_0}$ є піднапівгрупою в $\boldsymbol{B}_{\omega}^{\mathscr{F}_0}$.

У цій праці ми вивчаємо алгебраїчну структуру напівгрупи $\boldsymbol{B}_{\omega}^{\mathscr{F}}$ у випадку, коли ${\omega}$-замкнена сім'я $\mathscr{F}$ складається з індуктивних непорожніх підмножин у $\omega$, а саме групові конгруенції на напівгрупі $\boldsymbol{B}_{\omega}^{\mathscr{F}}$ та її гомоморфні ретракти. Доведено, що конгруенція $\mathfrak{C}$ на $\boldsymbol{B}_{\omega}^{\mathscr{F}}$ є груповою, тоді і лише тоді, коли звуження конгруенції $\mathfrak{C}$ на піднапівгрупу в $\boldsymbol{B}_{\omega}^{\mathscr{F}}$, яка ізоморфна біциклічній напівгрупі, не є відношенням рівності. Також, ми описуємо всі нетривіальні гомоморфні ретракти та ізоморфізми напівгрупи $\boldsymbol{B}_{\omega}^{\mathscr{F}}$.

Надалі скрізь в тексті ми вважаємо, що ${\omega}$-замкнена сім'я $\mathscr{F}$ складається лише з індуктивних непорожніх підмножин у $\omega$.
\section{Алгебраїчні властивості напівгрупи $\boldsymbol{B}_{\omega}^{\mathscr{F}}$}\label{section-2}

\begin{proposition}\label{proposition-2.1}
Нехай $\mathscr{F}$ --- довільна ${\omega}$-замкнена сім'я підмножин у $\omega$ і $n_0=\min \displaystyle\big\{\bigcup\mathscr{F}\big\}$. Тоді:
\begin{enumerate}
  \item\label{proposition-2.1(1)} $\mathscr{F}_0=\left\{-n_0+F\colon F\in\mathscr{F}\right\}$ --- ${\omega}$-замкнена сім'я підмножин у $\omega$;
  \item\label{proposition-2.1(2)} напівгрупи $\boldsymbol{B}_{\omega}^{\mathscr{F}}$ і $\boldsymbol{B}_{\omega}^{\mathscr{F}_0}$ ізоморфні.
\end{enumerate}
\end{proposition}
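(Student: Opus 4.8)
The plan is to treat the passage from $\mathscr{F}$ to $\mathscr{F}_0$ as a single translation of the third coordinate by $-n_0$, and to reduce everything to two elementary facts about translations of subsets of $\mathbb{Z}$: for a fixed integer $a$ and arbitrary sets $A,B$ one has $a+(A\cap B)=(a+A)\cap(a+B)$ and $a+(b+C)=(a+b)+C$, both because $x\mapsto a+x$ is injective. First I would record that $n_0=\min\bigcup\mathscr{F}$ is well defined (every member of $\mathscr{F}$ being a non-empty subset of $\omega$) and that $n_0\leqslant k$ for every $k\in\bigcup\mathscr{F}$; consequently $-n_0+F\subseteq\omega$ for each $F\in\mathscr{F}$, so $\mathscr{F}_0=\{-n_0+F\colon F\in\mathscr{F}\}$ really is a family of subsets of $\omega$, and $F\mapsto -n_0+F$ is a bijection $\mathscr{F}\to\mathscr{F}_0$ with inverse $G\mapsto n_0+G$ (note that $\varnothing\in\mathscr{F}$ if and only if $\varnothing\in\mathscr{F}_0$).

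For part~\eqref{proposition-2.1(1)} I would take arbitrary $G_1=-n_0+F_1$ and $G_2=-n_0+F_2$ in $\mathscr{F}_0$ together with $n\in\omega$, and compute $G_1\cap(-n+G_2)$ directly. Using the two translation identities,
$$G_1\cap(-n+G_2)=(-n_0+F_1)\cap\bigl(-n_0+(-n+F_2)\bigr)=-n_0+\bigl(F_1\cap(-n+F_2)\bigr).$$
Since $\mathscr{F}$ is $\omega$-closed, $F_1\cap(-n+F_2)\in\mathscr{F}$, whence $G_1\cap(-n+G_2)\in\mathscr{F}_0$; this is exactly the $\omega$-closedness of $\mathscr{F}_0$.

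For part~\eqref{proposition-2.1(2)} I would define $\Phi\colon(\boldsymbol{B}_{\omega}\times\mathscr{F},\cdot)\to(\boldsymbol{B}_{\omega}\times\mathscr{F}_0,\cdot)$ by $\Phi(i,j,F)=(i,j,-n_0+F)$. It is a bijection because the translation on the third coordinate is. To see that it is a homomorphism I would run through the three cases $j_1<i_2$, $j_1=i_2$, $j_1>i_2$ of the multiplication; in each case the first two coordinates are untouched by $\Phi$ and agree automatically, while the third coordinate of the product of the images reduces, via the same two translation identities, to $-n_0$ applied to the third coordinate of the product. For instance, when $j_1<i_2$ the third coordinate of $\Phi(i_1,j_1,F_1)\cdot\Phi(i_2,j_2,F_2)$ is $\bigl(j_1-i_2+(-n_0+F_1)\bigr)\cap(-n_0+F_2)=-n_0+\bigl((j_1-i_2+F_1)\cap F_2\bigr)$, which is precisely the image under $\Phi$ of the third coordinate of $(i_1,j_1,F_1)\cdot(i_2,j_2,F_2)$; the cases $j_1=i_2$ and $j_1>i_2$ are identical in spirit.

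Finally I would pass to $\boldsymbol{B}_{\omega}^{\mathscr{F}}$ and $\boldsymbol{B}_{\omega}^{\mathscr{F}_0}$. If $\varnothing\notin\mathscr{F}$, then $\boldsymbol{B}_{\omega}^{\mathscr{F}}=(\boldsymbol{B}_{\omega}\times\mathscr{F},\cdot)$ and likewise for $\mathscr{F}_0$, so $\Phi$ is already the required isomorphism. If $\varnothing\in\mathscr{F}$, then $\Phi$ carries the ideal $\boldsymbol{I}=\{(i,j,\varnothing)\}$ bijectively onto the corresponding ideal of $(\boldsymbol{B}_{\omega}\times\mathscr{F}_0,\cdot)$ (because $-n_0+\varnothing=\varnothing$), hence induces an isomorphism of the Rees quotients, which by Definition~\ref{definition-1.1} are exactly $\boldsymbol{B}_{\omega}^{\mathscr{F}}$ and $\boldsymbol{B}_{\omega}^{\mathscr{F}_0}$. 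I do not expect a genuine obstacle here: the whole argument is driven by the single observation that a fixed translation commutes with intersection and with the translations built into the multiplication, so the only thing requiring care is the uniform bookkeeping across the three multiplication cases and the separate treatment of the Rees quotient in the case $\varnothing\in\mathscr{F}$.
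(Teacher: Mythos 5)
Your proposal is correct and takes essentially the same approach as the paper: the identical map $(i,j,F)\mapsto(i,j,-n_0+F)$, verified to be a bijective homomorphism by the same three-case computation with translation identities, with the observation $-n_0+\varnothing=\varnothing$ taking care of the ideal $\boldsymbol{I}$. You merely make explicit two points the paper leaves terse --- the verification of part~\eqref{proposition-2.1(1)} (which the paper declares obvious) and the descent of the isomorphism to the Rees quotients when $\varnothing\in\mathscr{F}$ --- and both of these elaborations are accurate.
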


\begin{proof}
Спочатку зауважимо, оскільки множина $\omega$ зі звичайним порядком $\leqslant$ є цілком впорядкованою, то невід'ємне ціле число $n_0$ визначено коректно.

Твердження \eqref{proposition-2.1(1)} очевидне.

\eqref{proposition-2.1(2)} Очевидно, що відображення $\mathfrak{h}\colon (\boldsymbol{B}_{\omega}\times\mathscr{F},\cdot)\to(\boldsymbol{B}_{\omega}\times\mathscr{F}_0,\cdot)$, означене за формулою
\begin{equation*}
  \mathfrak{h}(i,j,F)=(i,j,-n_0+F),
\end{equation*}
бієктивне. Врахувавши, що $-n_0+\varnothing=\varnothing$,
\begin{align*}
   \mathfrak{h}((i_1,j_1,F_1)\cdot(i_2,j_2,F_2))&=
  \left\{
    \begin{array}{ll}
       \mathfrak{h}(i_1-j_1+i_2,j_2,(j_1-i_2+F_1)\cap F_2), & \hbox{якщо~} j_1<i_2;\\
       \mathfrak{h}(i_1,j_2,F_1\cap F_2),                   & \hbox{якщо~} j_1=i_2;\\
       \mathfrak{h}(i_1,j_1-i_2+j_2,F_1\cap (i_2-j_1+F_2)), & \hbox{якщо~} j_1>i_2
    \end{array}
  \right.=\\
  &=
  \left\{
    \begin{array}{ll}
      (i_1-j_1+i_2,j_2,-n_0+((j_1-i_2+F_1)\cap F_2)), & \hbox{якщо~} j_1<i_2;\\
      (i_1,j_2,-n_0+(F_1\cap F_2)),                   & \hbox{якщо~} j_1=i_2;\\
      (i_1,j_1-i_2+j_2,-n_0+(F_1\cap (i_2-j_1+F_2))), & \hbox{якщо~} j_1>i_2
    \end{array}
  \right.
\end{align*}
i
\begin{align*}
   \mathfrak{h}(i_1,j_1,F_1)\cdot\mathfrak{h}(i_2,j_2,F_2)&=(i_1,j_1,-n_0+F_1)\cdot(i_2,j_2,-n_0+F_2)=\\
  &=\left\{
    \begin{array}{ll}
       (i_1-j_1+i_2,j_2,(-n_0+j_1-i_2+F_1)\cap(-n_0+F_2)), & \hbox{якщо~} j_1<i_2;\\
       (i_1,j_2,(-n_0+F_1)\cap(-n_0+F_2)),                 & \hbox{якщо~} j_1=i_2;\\
       (i_1,j_1-i_2+j_2,(-n_0+F_1)\cap(-n_0+i_2-j_1+F_2)), & \hbox{якщо~} j_1>i_2
    \end{array}
  \right.= \\
  &=
  \left\{
    \begin{array}{ll}
      (i_1-j_1+i_2,j_2,-n_0+((j_1-i_2+F_1)\cap F_2)), & \hbox{якщо~} j_1<i_2;\\
      (i_1,j_2,-n_0+(F_1\cap F_2)),                   & \hbox{якщо~} j_1=i_2;\\
      (i_1,j_1-i_2+j_2,-n_0+(F_1\cap (i_2-j_1+F_2))), & \hbox{якщо~} j_1>i_2,
    \end{array}
  \right.
\end{align*}
отримуємо, що так означене відображення $\mathfrak{h}\colon (\boldsymbol{B}_{\omega}\times\mathscr{F},\cdot)\to(\boldsymbol{B}_{\omega}\times\mathscr{F}_0,\cdot)$ є гомоморфізмом, а отже, воно є ізоморфізмом. Звідки випливає, що  напівгрупи $\boldsymbol{B}_{\omega}^{\mathscr{F}}$ і $\boldsymbol{B}_{\omega}^{\mathscr{F}_0}$ ізоморфні.
\end{proof}

Врахувавши зауваження \ref{remark-2.2}\eqref{remark-2.2(2)} і \ref{remark-2.2}\eqref{remark-2.2(3)}, надалі для кожної непорожньої множини $F\in\mathscr{F}$ приймемо $n_F=\min F$.

\begin{lemma}\label{lemma-2.3}
Нехай $\mathscr{F}$ --- ${\omega}$-замкнена сім'я індуктивних підмножин у $\omega$ та $F_1$ i $F_2$~--- такі елементи сім'ї $\mathscr{F}$, що $n_{F_1}<n_{F_2}$. Тоді для кожного натурального числа $k\in\{n_{F_1}+1,\ldots,n_{F_2}-1\}$ існує множина $F\in\mathscr{F}$ така, що $F=[k)$.
\end{lemma}

\begin{proof}
З припущення леми випливає, що $F_1\supsetneqq F_2$. Тоді для довільного натурального числа $k\in\{n_{F_1}+1,\ldots,n_{F_2}-1\}$ ціле число $i=n_{F_1}+n_{F_2}-k$ задовольняє умову $n_{F_1}+1\leqslant i\leqslant n_{F_2}-1$, а отже
\begin{equation*}
  (i,i,F_1)\cdot(n_{F_1},n_{F_1},F_2)=(i,i,F_1\cap(n_{F_1}-i+F_2)).
\end{equation*}
Позаяк $F_2\subsetneqq F_1$ і $n_{F_1}+1\leqslant i\leqslant n_{F_2}-1$, то
\begin{equation*}
  \min\{F_1\cap(n_{F_1}-i+F_2)\}=\min\{n_{F_1}-i+F_2\}=n_{F_1}-i+n_{F_2}=k.
\end{equation*}
Отож, виконується рівність
\begin{equation*}
  (i,i,F_1)\cdot(n_{F_1},n_{F_1},F_2)=(i,i,[k)),
\end{equation*}
з якої випливає твердження леми.
\end{proof}

\begin{remark}\label{remark-2.4}
З твердження \ref{proposition-2.1} і леми \ref{lemma-2.3} випливає, якщо $\mathscr{F}$ --- ${\omega}$-замкнена сім'я індуктивних підмножин у $\omega$, то надалі для спрощення викладу будемо вважати, що:
\begin{enumerate}
  \item\label{remark-2.4(1)} $\mathscr{F}=\left\{[k)\colon k\in\omega\right\}$, у випадку нескінченної сім'ї $\mathscr{F}$;
  \item\label{remark-2.4(2)} $\mathscr{F}=\left\{[k)\colon k=0,1,\ldots,n\right\}$ для деякого $n\in\omega$, у випадку скінченної сім'ї $\mathscr{F}$.
\end{enumerate}
\end{remark}

З леми 5 \cite{Gutik-Mykhalenych-2020} врахувавши зауваження \ref{remark-2.4}\eqref{remark-2.4(1)}, отримуємо

\begin{proposition}\label{proposition-2.5}
Якщо $\mathscr{F}$ --- нескінченна ${\omega}$-замкнена сім'я індуктивних підмножин у $\omega$, то діаграма
\begin{equation*}
\tiny{
\xymatrix{
(0,0,\omega)\ar[rd]\ar[dd]&&&&&&&&\\
&(0,0,[1))\ar[ld]\ar[dd]\ar[rd]&&&&&&&\\
(1,1,\omega)\ar[rd]\ar[dd]&&(0,0,[2))\ar[rd]\ar[ld]\ar[dd]&&&&&&\\
&(1,1,[1))\ar[ld]\ar[dd]\ar[rd]&&(0,0,[3))\ar[ld]\ar[dd]\ar[rd]&&&&&\\
(2,2,\omega)\ar[rd]\ar[dd]&&(1,1,[2))\ar[ld]\ar[dd]\ar[rd]&&(0,0,[4))\ar[ld]\ar[dd]\ar[rd]&&&&\\
&(2,2,[1))\ar[ld]\ar[dd]\ar[rd]&&(1,1,[3))\ar[ld]\ar[dd]\ar[rd]&&(0,0,[5))\ar[ld]\ar[dd]\ar[rd]&&&\\
(3,3,\omega)\ar[rd]\ar[dd]&&(2,2,[2))\ar[ld]\ar[dd]\ar[rd]&&(1,1,[4))\ar[ld]\ar[dd]\ar[rd]&&(0,0,[6))\ar[ld]\ar[dd]\ar[rd]&&\\
&(3,3,[1))\ar[ld]\ar[d]\ar[rd]&&(2,2,[3))\ar[ld]\ar[d]\ar[rd]&&(1,1,[5))\ar[ld]\ar[d]\ar[rd]&&(0,0,[7))\ar[ld]\ar[d]\ar[rd]&\\
\cdots&&\cdots&&\cdots&&\cdots&&\cdots
}
}
\end{equation*}
описує природний частковий порядок на в'язці напівгрупи $\boldsymbol{B}_{\omega}^{\mathscr{F}}$.
\end{proposition}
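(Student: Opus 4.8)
The plan is to pin down the idempotent set of $\boldsymbol{B}_{\omega}^{\mathscr{F}}$ explicitly and then read the natural partial order straight off the semigroup operation. By Remark~\ref{remark-2.4}\eqref{remark-2.4(1)} we are in the case $\mathscr{F}=\{[k)\colon k\in\omega\}$, and since every $[k)$ is non-empty we have $\varnothing\notin\mathscr{F}$, so $\boldsymbol{B}_{\omega}^{\mathscr{F}}=(\boldsymbol{B}_{\omega}\times\mathscr{F},\cdot)$ with no adjoined zero and nothing to factor out. First I would check, using the three-case multiplication formula, that a triple $(i,j,F)$ is idempotent if and only if $i=j$: when $i<j$ the second coordinate of the square is $2j-i\neq j$, and when $i>j$ the first coordinate is $2i-j\neq i$, while for $i=j$ the middle case gives $(i,i,F\cap F)=(i,i,F)$. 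Hence $E(\boldsymbol{B}_{\omega}^{\mathscr{F}})=\{(i,i,[k))\colon i,k\in\omega\}$.

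The core computation is the product of two idempotents $e=(a,a,[p))$ and $f=(b,b,[q))$. The only delicate point is the third coordinate: a shift $(n-m)+[k)$ is the half-line $\{t\in\mathbb{Z}\colon t\geq k+n-m\}$, which may dip below $0$, but intersecting it with another half-line $[q)\subseteq\omega$ (where $q\geq 0$) always yields $[\max(\cdot,q))\in\mathscr{F}$. Carrying this through the three cases gives
\[
 (a,a,[p))\cdot(b,b,[q))=
 \begin{cases}
 (b,b,[\max(p+a-b,\,q))), & a<b;\\
 (a,a,[\max(p,q))), & a=b;\\
 (a,a,[\max(p,\,q+b-a))), & a>b.
 \end{cases}
\]
In particular the product is symmetric in $e$ and $f$, confirming (as it must be, $\boldsymbol{B}_{\omega}^{\mathscr{F}}$ being inverse) that $E(\boldsymbol{B}_{\omega}^{\mathscr{F}})$ is a semilattice, so the natural partial order is just $e\preccurlyeq f\Leftrightarrow ef=e$.

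From the displayed product I would then extract the order relation: $(a,a,[p))\preccurlyeq(b,b,[q))$ holds exactly when $a\geq b$ and $a+p\geq b+q$. Indeed, if $a<b$ the product has first coordinate $b\neq a$; if $a=b$ one needs $p\geq q$; if $a>b$ one needs $q+b\leq p+a$ --- and all three collapse to that single pair of inequalities. Introducing coordinates $x=a$ and $y=a+p$, which range over $\{(x,y)\colon 0\leq x\leq y\}$, this says that $\preccurlyeq$ is precisely the \emph{reverse} of the coordinatewise order on that set. Consequently the lower covers of $(i,i,[k))$ are obtained by a minimal increase of $x$ or of $y$, namely $(i,i,[k+1))$ (always) and $(i+1,i+1,[k-1))$ (only when $k\geq 1$, i.e. when $[k)\neq\omega$).

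Finally I would match this to the figure. Grouping the idempotents along the anti-diagonals $x+y=\mathrm{const}$ reproduces the rows of the diagram, and the two families of covering relations are exactly the slanted arrows: ``$\searrow$'' sends $(i,i,[k))\mapsto(i,i,[k+1))$, and ``$\swarrow$'' sends $(i,i,[k))\mapsto(i+1,i+1,[k-1))$ --- the latter being absent precisely along the left edge $[k)=\omega$ where $k=0$, which matches the figure. The vertical arrows $(i,i,[k))\mapsto(i+1,i+1,[k))$ are the induced relations $\succ$ that factor as a composite of the two covers (through $(i,i,[k+1))$ or $(i+1,i+1,[k-1))$), so they record order relations rather than fresh coverings. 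I expect the only genuine work to be the third-coordinate bookkeeping in the idempotent product --- tracking the $\max$ and the half-lines that slip below $0$; once the displayed product is in hand, the order inequalities and their agreement with the picture are immediate.
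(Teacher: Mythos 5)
Your proof is correct, but it takes a genuinely different route from the paper. The paper offers no computational proof of Proposition~\ref{proposition-2.5} at all: the statement is obtained by combining Remark~\ref{remark-2.4}\eqref{remark-2.4(1)} (up to the shift isomorphism of Proposition~\ref{proposition-2.1}, an infinite ${\omega}$-closed family of inductive subsets is exactly $\left\{[k)\colon k\in\omega\right\}$) with Lemma~5 of \cite{Gutik-Mykhalenych-2020}, which already characterizes the natural partial order on the idempotents of $\boldsymbol{B}_{\omega}^{\mathscr{F}}$ for an arbitrary ${\omega}$-closed family via a set containment --- in the form the present paper itself uses inside the proof of Proposition~\ref{proposition-2.6}: for $i_2\geqslant i_1$, $(i_2,i_2,F_2)\preccurlyeq(i_1,i_1,F_1)$ forces $F_2\subseteq i_1-i_2+F_1$. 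Specializing $F_1=[p_1)$, $F_2=[p_2)$ turns that containment into $i_2+p_2\geqslant i_1+p_1$, which is exactly the pair of inequalities you derived, so the two descriptions agree. What you do instead is rebuild everything from the multiplication formula: classify the idempotents, compute the product of two idempotents (with the right care about shifted half-lines dropping below $0$ and the resulting $\max$ in the third coordinate), extract the order $a\geqslant b$, $a+p\geqslant b+q$, and then --- going beyond what the paper makes explicit --- isolate the covering relations $(i,i,[k))\succ(i,i,[k+1))$ and $(i,i,[k))\succ(i+1,i+1,[k-1))$ and explain that the vertical arrows of the figure are composites of covers rather than covers themselves, so the picture is an order diagram rather than literally a Hasse diagram. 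Your version buys self-containedness and a sharper reading of the figure; the paper's version buys brevity by outsourcing the order computation to the earlier paper \cite{Gutik-Mykhalenych-2020}.
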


З леми 5 \cite{Gutik-Mykhalenych-2020}, врахувавши зауваження \ref{remark-2.4}\eqref{remark-2.4(2)}, отримуємо

\begin{proposition}\label{proposition-2.9}
Якщо $\mathscr{F}=\left\{[0),\ldots[k)\right\}$ --- скінченна ${\omega}$-замкнена сім'я індуктивних підмножин у $\omega$, то частина діаграми з твердження \ref{proposition-2.5}, що складається з ідемпотентів
\begin{equation*}
\left\{(i,i,[n))\colon i,j\in\omega, \; n=0,1,\ldots, k\right\}
\end{equation*}
напівгрупи $\boldsymbol{B}_{\omega}^{\mathscr{F}}$ та відповідних стрілок, що їх з'єднують,
описує природний частковий порядок на в'язці напівгрупи $\boldsymbol{B}_{\omega}^{\mathscr{F}}$.
\end{proposition}

З означення напівгрупової операції на $\boldsymbol{B}_{\omega}^{\mathscr{F}}$ випливає, якщо $\mathscr{F}$ ---  ${\omega}$-замкнена сім'я підмножин у $\omega$ та $F\in\mathscr{F}$~--- непорожня індуктивна підмножина в $\omega$, то множина
\begin{equation*}
  \boldsymbol{B}_{\omega}^{\{F\}}=\left\{(i,j,F)\colon i,j\in\omega\right\}
\end{equation*}
з індукованою напівгруповою операцією з $\boldsymbol{B}_{\omega}^{\mathscr{F}}$ є піднапівгрупою в $\boldsymbol{B}_{\omega}^{\mathscr{F}}$, яка за твердженням 3 з~\cite{Gutik-Mykhalenych-2020} ізоморфна біциклічній напівгрупі.

\begin{proposition}\label{proposition-2.6}
Нехай $\mathscr{F}$ --- довільна ${\omega}$-замкнена сім'я індуктивних підмножин у $\omega$ та $S$~--- піднапівгрупа в $\boldsymbol{B}_{\omega}^{\mathscr{F}}$, яка ізоморфна біциклічній напівгрупі. Тоді існує така підмножина $F\in\mathscr{F}$, що $S$ --- піднапівгрупа в $\boldsymbol{B}_{\omega}^{\{F\}}$.
\end{proposition}

\begin{proof}
Нехай $\mathfrak{h}\colon\boldsymbol{B}_\omega\to\boldsymbol{B}_{\omega}^{\mathscr{F}}$~--- ізоморфізм вкладення. Тоді за твердженням~1.4.21(2)~\cite{Lawson-1998}, $\mathfrak{h}(0,0)$ i $\mathfrak{h}(1,1)$~--- ідемпотенти напівгрупи $\boldsymbol{B}_{\omega}^{\mathscr{F}}$, а отже за лемою 2 \cite{Gutik-Mykhalenych-2020}, $\mathfrak{h}(0,0)=(i_1,i_1,F_1)$ i $\mathfrak{h}(1,1)=(i_2,i_2,F_2)$ для деяких $i_1,i_2\in\omega$ i $F_1,F_2\in \mathscr{F}$. Також, з нерівності $(1,1)\preccurlyeq(0,0)$ у біциклічній напівгрупі $\boldsymbol{B}_\omega$ випливає, що $(i_2,i_2,F_2)\preccurlyeq(i_1,i_1,F_1)$ у напівгрупі $\boldsymbol{B}_{\omega}^{\mathscr{F}}$. Позаяк
\begin{equation*}
(1,1)\in (0,0)\boldsymbol{B}_{\omega}(0,0)=(0,0)\cdot\boldsymbol{B}_{\omega}\cap \boldsymbol{B}_{\omega}\cdot(0,0),
\end{equation*}
то
\begin{equation*}
(i_2,i_2,F_2)\in (i_1,i_1,F_1)\cdot \boldsymbol{B}_{\omega}^{\mathscr{F}}\cdot(i_1,i_1,F_1)= (i_1,i_1,F_1)\cdot \boldsymbol{B}_{\omega}^{\mathscr{F}}\cap \boldsymbol{B}_{\omega}^{\mathscr{F}}\cdot(i_1,i_1,F_1),
\end{equation*}
а отже з означення напівгрупової операції в $\boldsymbol{B}_{\omega}^{\mathscr{F}}$ випливає, що $i_1\leqslant i_2$. Позаяк $(i_2,i_2,F_2)\preccurlyeq(i_1,i_1,F_1)$ у $\boldsymbol{B}_{\omega}^{\mathscr{F}}$, то з леми 5 \cite{Gutik-Mykhalenych-2020} випливає, що $F_2\subseteq i_1-i_2+F_1$. Також, оскільки $\mathscr{F}$ --- сім'я індуктивних підмножин у $\omega$, то існують такі числа $k_1,k_2\in\omega$, що $F_1=[k_1)$ i $F_2=[k_2)$.

Аналогічно отримуємо, що $\mathfrak{h}(0,1)=(i,j,[a))$  для деяких $i,j,a\in\omega$. Позаяк $(1,0)$~--- інверсний елемент до $(0,1)$ в напівгрупі $\boldsymbol{B}_\omega$, то за твердженням 1.4.21 \cite{Lawson-1998} і лемою 4 \cite{Gutik-Mykhalenych-2020} маємо, що
\begin{equation*}
  \mathfrak{h}(1,0)=\mathfrak{h}\left((1,0)^{-1}\right)=\left(\mathfrak{h}(1,0)\right)^{-1}=\left((i,j,[a))\right)^{-1}=(j,i,[a)),
\end{equation*}
Тоді
\begin{equation*}
  (i_1,i_1,[k_1))=\mathfrak{h}(0,0)=\mathfrak{h}((0,1)\cdot(1,0))=\mathfrak{h}(0,1)\cdot\mathfrak{h}(1,0)=(i,j,[a))\cdot(j,i,[a))=(i,i,[a))
\end{equation*}
і, аналогічно,
\begin{equation*}
  (i_2,i_2,[k_2))=\mathfrak{h}(1,1)=\mathfrak{h}((1,0)\cdot(0,1))=\mathfrak{h}(1,0)\cdot\mathfrak{h}(0,1)=(j,i,[a))\cdot(i,j,[a))=(j,j,[a)).
\end{equation*}
З однозначності зображення елементів напівгрупи $\boldsymbol{B}_{\omega}^{\mathscr{F}}$ випливає, що $i=i_1$, $j=i_2$ й ${a=k_1=k_2}$, а отже  $\mathfrak{h}(0,1)=(i_1,i_2,[k_1))$ i $\mathfrak{h}(1,0)=(i_2,i_1,[k_1))$. З означення напівгрупової операції в $\boldsymbol{B}_{\omega}^{\mathscr{F}}$ випливає, що піднапівгрупа $S=\left\langle(i_1,i_2,[k_1)),(i_2,i_1,[k_1))\right\rangle$ в $\boldsymbol{B}_{\omega}^{\mathscr{F}}$, породжена елементами $(i_1,i_2,[k_1))$ i $(i_2,i_1,[k_1))$, є інверсною піднапівгрупою в $\boldsymbol{B}_{\omega}^{\{[k_1)\}}$. Напівгрупа $\boldsymbol{B}_{\omega}^{\{[k_1)\}}$ шукана.
\end{proof}

Теорема \ref{theorem-2.7} описує групові конгруенції на напівгрупі $\boldsymbol{B}_{\omega}^{\mathscr{F}}$.

\begin{theorem}\label{theorem-2.7}
Нехай $\mathscr{F}$ --- довільна ${\omega}$-замкнена сім'я індуктивних непорожніх підмножин у $\omega$ та $\mathfrak{C}$~--- конгруенція на $\boldsymbol{B}_{\omega}^{\mathscr{F}}$. Тоді такі умови еквівалентні:
\begin{enumerate}
  \item\label{theorem-2.7(1)} $\mathfrak{C}$~--- групова конгруенція на $\boldsymbol{B}_{\omega}^{\mathscr{F}}$;
  \item\label{theorem-2.7(2)} існує піднапівгрупа $S$ в $\boldsymbol{B}_{\omega}^{\mathscr{F}}$, яка ізоморфна біциклічній напівгрупі та два різні елементи напівгрупи $S$ є $\mathfrak{C}$-ек\-вівалентними;
  \item\label{theorem-2.7(3)} для довільної піднапівгрупи $T$ в $\boldsymbol{B}_{\omega}^{\mathscr{F}}$, яка ізоморфна біциклічній напівгрупі, два різні елементи напівгрупи $T$ є $\mathfrak{C}$-ек\-вівалентними.
\end{enumerate}
\end{theorem}

\begin{proof}
Імплікації \eqref{theorem-2.7(1)}$\Rightarrow$\eqref{theorem-2.7(3)} i \eqref{theorem-2.7(3)}$\Rightarrow$\eqref{theorem-2.7(2)} очевидні.

Доведемо імплікацію \eqref{theorem-2.7(2)}$\Rightarrow$\eqref{theorem-2.7(1)}. Нехай $S$ --- піднапівгрупа в $\boldsymbol{B}_{\omega}^{\mathscr{F}}$, яка ізоморфна біциклічній напівгрупі та два різні елементи в $S$ є $\mathfrak{C}$-ек\-вівалентними. За твердженням \ref{proposition-2.6} існує множина $F\in\mathscr{F}$ така, що $S$ --- піднапівгрупа в $\boldsymbol{B}_{\omega}^{\{F\}}$ і за твердженням 3 \cite{Gutik-Mykhalenych-2020} напівгрупа $\boldsymbol{B}_{\omega}^{\{F\}}$ ізоморфна біциклічній напівгрупі. Тоді два різні елементи в $\boldsymbol{B}_{\omega}^{\{F\}}$ є $\mathfrak{C}$-ек\-вівалентними, а за наслідком 1.32 \cite{Clifford-Preston-1961} усі ідемпотенти напівгрупа $\boldsymbol{B}_{\omega}^{\{F\}}$ є $\mathfrak{C}$-ек\-вівалентними.

З твердження~\ref{proposition-2.1} випливає, що не змешуючи загальності, можемо вважати, що $[0)\in\mathscr{F}$. 
За тверд\-жен\-ням \ref{proposition-2.5} існують два різні ідемпотенти $(i_1,i_1,[0)),(i_2,i_2,[0))$ у піднапівгрупі $\boldsymbol{B}_{\omega}^{\{[0)\}}$, які є $\mathfrak{C}$-ек\-вівалентними. Далі, знову використавши наслідок 1.32 \cite{Clifford-Preston-1961}, отримуємо, що всі ідемпотенти піднапівгрупи $\boldsymbol{B}_{\omega}^{\{[0)\}}$ є $\mathfrak{C}$-ек\-вівалентними. З тверджень \ref{proposition-2.5} і \ref{proposition-2.9} випливає, що для довільних ідемпотентів $(i,i,[a)),(j,j,[b))\in\boldsymbol{B}_{\omega}^{\mathscr{F}}$ існують ідемпотенти $(i_1,i_1,[0)),(i_2,i_2,[0))\in \boldsymbol{B}_{\omega}^{\{[0)\}}$ такі, що $(i_1,i_1,[0))\preccurlyeq(i,i,[a))\preccurlyeq(i_2,i_2,[0))$ i $(i_1,i_1,[0))\preccurlyeq(j,j,[b))\preccurlyeq(i_2,i_2,[0))$. Звідки випливає, що ідемпотенти $(i,i,[a))$ та $(j,j,[b))$ також є $\mathfrak{C}$-ек\-вівалентними, а отже виконується імплікація \eqref{theorem-2.7(2)}$\Rightarrow$\eqref{theorem-2.7(1)}.
\end{proof}

Зауважимо, що аналогічне твердження до теореми \ref{theorem-2.7} справджується і для напівгрупи часткових коскiнченних iзометрiй натуральних чисел $\mathbf{I}\mathbb{N}_{\infty}$ (див. \cite[теорема 9]{Gutik-Savchuk-2018}).

З прикладу \ref{example-2.8} випливає, що на напівгрупі $\boldsymbol{B}_{\omega}^{\mathscr{F}}$ існують негрупові конгруенції.

\begin{example}\label{example-2.8}
Нехай $\mathscr{F}$ --- довільна ${\omega}$-замкнена сім'я індуктивних непорожніх підмножин у $\omega$. Означимо відображення $\mathfrak{h}\colon \boldsymbol{B}_{\omega}^{\mathscr{F}}\to \boldsymbol{B}_{\omega}$ за формулою
\begin{equation*}
\mathfrak{h}(i,j,F)=(i,j),\qquad i,j\in\omega,\quad F\in\mathscr{F}.
\end{equation*}
З оз\-на\-чен\-ня напівгрупових операцій на напівгрупах $\boldsymbol{B}_{\omega}^{\mathscr{F}}$ i $\boldsymbol{B}_{\omega}$ випливає, що так означене відоб\-ра\-ження $\mathfrak{h}$ є сюр'єктивним гомоморфізмом. Отож, конгруенція $\mathfrak{h}^\sharp$ на напівгрупі $\boldsymbol{B}_{\omega}^{\mathscr{F}}$, породжена гомомор\-фіз\-мом $\mathfrak{h}$, не є груповою.
\end{example}


\section{Гомоморфні ретракти та ізоморфізми напівгрупи $\boldsymbol{B}_{\omega}^{\mathscr{F}}$}\label{section-3}

\begin{example}\label{example-3.1}
Нехай $\mathscr{F}$ --- довільна ${\omega}$-замкнена сім'я індуктивних непорожніх підмножин у $\omega$. Зафіксуємо довільну множину $F\in\mathscr{F}$. Означимо відображення $\mathfrak{h}_F\colon \boldsymbol{B}_{\omega}\to \boldsymbol{B}_{\omega}^{\mathscr{F}}$ за формулою
\begin{equation*}
\mathfrak{h}_F(i,j)=(i,j,F),\qquad i,j\in\omega.
\end{equation*}
З означення напівгрупових операцій на напівгрупах $\boldsymbol{B}_{\omega}^{\mathscr{F}}$ i $\boldsymbol{B}_{\omega}$ випливає, що так означене відображення $\mathfrak{h}_F$ є гомоморфізмом.
\end{example}

Оскільки композиція гомоморфізмів напівгруп є гомоморфізмом, то з визначення гомо\-мор\-фіз\-мів $\mathfrak{h}\colon \boldsymbol{B}_{\omega}^{\mathscr{F}}\to \boldsymbol{B}_{\omega}$ і  $\mathfrak{h}_F\colon \boldsymbol{B}_{\omega}\to \boldsymbol{B}_{\omega}^{\mathscr{F}}$ (див. приклади \ref{example-2.8} і \ref{example-3.1}, відповідно), випливає, що їх композиція $\mathfrak{h}_F\circ\mathfrak{h}$ є гомоморфною ретракцією напівгрупи $\boldsymbol{B}_{\omega}^{\mathscr{F}}$, тобто виконується таке тверд\-жен\-ня:

\begin{proposition}\label{proposition-3.2}
Нехай $\mathscr{F}$ --- довільна ${\omega}$-замкнена сім'я індуктивних непорожніх підмножин у $\omega$ та $F\in\mathscr{F}$. Тоді піднапівгрупа $\boldsymbol{B}_{\omega}^{\{F\}}$ є гомоморфним ретрактом напівгрупи $\boldsymbol{B}_{\omega}^{\mathscr{F}}$.
\end{proposition}

\begin{example}\label{example-3.3}
Нехай $\mathscr{F}$ --- довільна ${\omega}$-замкнена сім'я індуктивних непорожніх підмножин у $\omega$. Зафіксуємо довільну множину $[k)\in\mathscr{F}$. Означимо відображення $\mathfrak{h}_k\colon \boldsymbol{B}_{\omega}^{\mathscr{F}}\to \boldsymbol{B}_{\omega}^{\mathscr{F}}$ за формулою
\begin{equation*}
  \mathfrak{h}_k(i,j,[a))=
  \left\{
    \begin{array}{ll}
      (i,j,[k)), & \hbox{якщо~} a\leqslant k;\\
      (i,j,[a)), & \hbox{якщо~} a>k,
    \end{array}
  \right.
\qquad i,j\in\omega, \quad [a)\in \mathscr{F}.
\end{equation*}
\end{example}

\begin{lemma}\label{lemma-3.4}
Якщо $\mathscr{F}$ --- довільна ${\omega}$-замкнена сім'я індуктивних непорожніх підмножин у $\omega$ та $[k)\in\mathscr{F}$, то $\mathfrak{h}_k\colon \boldsymbol{B}_{\omega}^{\mathscr{F}}\to \boldsymbol{B}_{\omega}^{\mathscr{F}}$ --- гомоморфізм.
\end{lemma}

\begin{proof}
Нехай $(i_1,j_1,[a_1))$ i $(i_2,j_2,[a_2))$~--- довільні елементи напівгрупи $\boldsymbol{B}_{\omega}^{\mathscr{F}}$. Розглянемо мож\-ливі випадки.

\smallskip

(1) Нехай $a_1,a_2\leqslant k$. Тоді
\begin{align*}
  \mathfrak{h}_k((i_1,j_1,[a_1))\cdot(i_2,j_2,[a_2)))&=
 \left\{
   \begin{array}{ll}
     \mathfrak{h}_k(i_1-j_1+i_2,j_2,(j_1-i_2+[a_1))\cap[a_2)), & \hbox{якщо~} j_1<i_2;\\
     \mathfrak{h}_k(i_1,j_2,[a_1)\cap[a_2)),                   & \hbox{якщо~} j_1=i_2;\\
     \mathfrak{h}_k(i_1,j_1-i_2+j_2,[a_1)\cap(i_2-j_1+[a_2))), & \hbox{якщо~} j_1>i_2
   \end{array}
 \right.
\\
  &=
\left\{
   \begin{array}{ll}
     (i_1-j_1+i_2,j_2,[k)), & \hbox{якщо~} j_1<i_2;\\
     (i_1,j_2,[k)),         & \hbox{якщо~} j_1=i_2;\\
     (i_1,j_1-i_2+j_2,[k)), & \hbox{якщо~} j_1>i_2
   \end{array}
 \right.
\end{align*}
i
\begin{align*}
  \mathfrak{h}_k(i_1,j_1,[a_1))\cdot\mathfrak{h}_k(i_2,j_2,[a_2))&=(i_1,j_1,[k))\cdot(i_2,j_2,[k))=\\
  &=
\left\{
   \begin{array}{ll}
     (i_1-j_1+i_2,j_2,(j_1-i_2+[k))\cap[k)), & \hbox{якщо~} j_1<i_2;\\
     (i_1,j_2,[k)\cap[k)),                   & \hbox{якщо~} j_1=i_2;\\
     (i_1,j_1-i_2+j_2,[k)\cap(i_2-j_1+[k))), & \hbox{якщо~} j_1>i_2
   \end{array}
 \right.
=\\
  &=
\left\{
   \begin{array}{ll}
     (i_1-j_1+i_2,j_2,[k)), & \hbox{якщо~} j_1<i_2;\\
     (i_1,j_2,[k)),         & \hbox{якщо~} j_1=i_2;\\
     (i_1,j_1-i_2+j_2,[k))), & \hbox{якщо~} j_1>i_2
   \end{array}
 \right.
\end{align*}

\smallskip

(2) Припустимо, що $a_1>k$ i $a_2\leqslant k$. Тоді маємо, що
\begin{align*}
  \mathfrak{h}_k((i_1,j_1,[a_1))\cdot(i_2,j_2,[a_2)))&=
 \left\{
   \begin{array}{ll}
     \mathfrak{h}_k(i_1-j_1+i_2,j_2,(j_1-i_2+[a_1))\cap[a_2)), & \hbox{якщо~} j_1<i_2;\\
     \mathfrak{h}_k(i_1,j_2,[a_1)\cap[a_2)),                   & \hbox{якщо~} j_1=i_2;\\
     \mathfrak{h}_k(i_1,j_1-i_2+j_2,[a_1)\cap(i_2-j_1+[a_2))), & \hbox{якщо~} j_1>i_2
   \end{array}
 \right.
\\
  &=
\left\{
   \begin{array}{ll}
     (i_1-j_1+i_2,j_2,[k)),           & \hbox{якщо~} j_1<i_2 \hbox{~i~} j_1-i_2+a_1\leqslant k;\\
     (i_1-j_1+i_2,j_2,j_1-i_2+[a_1)), & \hbox{якщо~} j_1<i_2 \hbox{~i~} j_1-i_2+a_1>k;\\
     (i_1,j_2,[a_1)),                 & \hbox{якщо~} j_1=i_2;\\
     (i_1,j_1-i_2+j_2,[a_1)),         & \hbox{якщо~} j_1>i_2
   \end{array}
 \right.
\end{align*}
i
\begin{align*}
  \mathfrak{h}_k(i_1,j_1,[a_1))\cdot\mathfrak{h}_k(i_2,j_2,[a_2))&=(i_1,j_1,[a_1))\cdot(i_2,j_2,[k))=\\
  &=
\left\{
   \begin{array}{ll}
     (i_1-j_1+i_2,j_2,(j_1-i_2+[a_1))\cap[k)), & \hbox{якщо~} j_1<i_2;\\
     (i_1,j_2,[a_1)\cap[k)),                   & \hbox{якщо~} j_1=i_2;\\
     (i_1,j_1-i_2+j_2,[a_1)\cap(i_2-j_1+[k))), & \hbox{якщо~} j_1>i_2
   \end{array}
 \right.
=\\
  &=
\left\{
   \begin{array}{ll}
     (i_1-j_1+i_2,j_2,[k)),           & \hbox{якщо~} j_1<i_2 \hbox{~i~} j_1-i_2+a_1\leqslant k;\\
     (i_1-j_1+i_2,j_2,j_1-i_2+[a_1)), & \hbox{якщо~} j_1<i_2 \hbox{~i~} j_1-i_2+a_1>k;\\
     (i_1,j_2,[a_1)),                 & \hbox{якщо~} j_1=i_2;\\
     (i_1,j_1-i_2+j_2,[a_1)),         & \hbox{якщо~} j_1>i_2.
   \end{array}
 \right.
\end{align*}

\smallskip

(3) Якщо $a_1\leqslant k$ і $a_2>k$, то
\begin{align*}
  \mathfrak{h}_k((i_1,j_1,[a_1))\cdot(i_2,j_2,[a_2)))&=
 \left\{
   \begin{array}{ll}
     \mathfrak{h}_k(i_1-j_1+i_2,j_2,(j_1-i_2+[a_1))\cap[a_2)), & \hbox{якщо~} j_1<i_2;\\
     \mathfrak{h}_k(i_1,j_2,[a_1)\cap[a_2)),                   & \hbox{якщо~} j_1=i_2;\\
     \mathfrak{h}_k(i_1,j_1-i_2+j_2,[a_1)\cap(i_2-j_1+[a_2))), & \hbox{якщо~} j_1>i_2
   \end{array}
 \right.
\\
  &=
\left\{
   \begin{array}{ll}
     (i_1-j_1+i_2,j_2,[a_2)),         & \hbox{якщо~} j_1<i_2;\\
     (i_1,j_2,[a_2)),                 & \hbox{якщо~} j_1=i_2;\\
     (i_1,j_1-i_2+j_2,[k)),           & \hbox{якщо~} j_1>i_2  \hbox{~i~} j_1-i_2+a_1\leqslant k;\\
     (i_1,j_1-i_2+j_2,i_2-j_1+[a_2)), & \hbox{якщо~} j_1>i_2  \hbox{~i~} j_1-i_2+a_1>k
   \end{array}
 \right.
\end{align*}
i
\begin{align*}
  \mathfrak{h}_k(i_1,j_1,[a_1))\cdot\mathfrak{h}_k(i_2,j_2,[a_2))&=(i_1,j_1,[k))\cdot(i_2,j_2,[a_2))=\\
  &=
\left\{
   \begin{array}{ll}
     (i_1-j_1+i_2,j_2,(j_1-i_2+[k))\cap[a_2)), & \hbox{якщо~} j_1<i_2;\\
     (i_1,j_2,[k)\cap[a_2)),                   & \hbox{якщо~} j_1=i_2;\\
     (i_1,j_1-i_2+j_2,[k)\cap(i_2-j_1+[a_2))), & \hbox{якщо~} j_1>i_2
   \end{array}
 \right.
=\\
  &=
\left\{
   \begin{array}{ll}
     (i_1-j_1+i_2,j_2,[a_2)),         & \hbox{якщо~} j_1<i_2;\\
     (i_1,j_2,[a_2)),                 & \hbox{якщо~} j_1=i_2;\\
     (i_1,j_1-i_2+j_2,[k)),           & \hbox{якщо~} j_1>i_2  \hbox{~i~} j_1-i_2+a_1\leqslant k;\\
     (i_1,j_1-i_2+j_2,i_2-j_1+[a_2)), & \hbox{якщо~} j_1>i_2  \hbox{~i~} j_1-i_2+a_1>k.
   \end{array}
 \right.
\end{align*}

\smallskip

(4) Припустимо, що $a_1,a_2>k$. Тоді маємо, що
\begin{align*}
  \mathfrak{h}_k((i_1,j_1,[a_1))\cdot(i_2,j_2,[a_2)))&=
 \left\{
   \begin{array}{ll}
     \mathfrak{h}_k(i_1-j_1+i_2,j_2,(j_1-i_2+[a_1))\cap[a_2)), & \hbox{якщо~} j_1<i_2;\\
     \mathfrak{h}_k(i_1,j_2,[a_1)\cap[a_2)),                   & \hbox{якщо~} j_1=i_2;\\
     \mathfrak{h}_k(i_1,j_1-i_2+j_2,[a_1)\cap(i_2-j_1+[a_2))), & \hbox{якщо~} j_1>i_2
   \end{array}
 \right.
\\
  &=
\left\{
   \begin{array}{ll}
     (i_1-j_1+i_2,j_2,[a_2)),         & \hbox{якщо~} j_1<i_2 \hbox{~i~} j_1-i_2+a_1\leqslant a_2;\\
     (i_1-j_1+i_2,j_2,j_1-i_2+[a_1)), & \hbox{якщо~} j_1<i_2 \hbox{~i~} j_1-i_2+a_1>a_2;\\
     (i_1,j_2,[a_2)),                 & \hbox{якщо~} j_1=i_2;\\
     (i_1,j_1-i_2+j_2,i_2-j_1+[a_2)), & \hbox{якщо~} j_1>i_2 \hbox{~i~} i_2-j_1+a_2\geqslant a_1;\\
     (i_1,j_1-i_2+j_2,[a_1)),         & \hbox{якщо~} j_1>i_2 \hbox{~i~} i_2-j_1+a_2<a_1
   \end{array}
 \right.
\end{align*}
i
\begin{align*}
  \mathfrak{h}_k(i_1,j_1,[a_1))\cdot\mathfrak{h}_k(i_2,j_2,[a_2))&=(i_1,j_1,[a_1))\cdot(i_2,j_2,[a_2))=\\
  &=
\left\{
   \begin{array}{ll}
     (i_1-j_1+i_2,j_2,(j_1-i_2+[a_1))\cap[a_2)), & \hbox{якщо~} j_1<i_2;\\
     (i_1,j_2,[a_1)\cap[a_2)),                   & \hbox{якщо~} j_1=i_2;\\
     (i_1,j_1-i_2+j_2,[a_1)\cap(i_2-j_1+[a_2))), & \hbox{якщо~} j_1>i_2
   \end{array}
 \right.
=\\
  &=
\left\{
   \begin{array}{ll}
     (i_1-j_1+i_2,j_2,[a_2)),         & \hbox{якщо~} j_1<i_2 \hbox{~i~} j_1-i_2+a_1\leqslant a_2;\\
     (i_1-j_1+i_2,j_2,j_1-i_2+[a_1)), & \hbox{якщо~} j_1<i_2 \hbox{~i~} j_1-i_2+a_1>a_2;\\
     (i_1,j_2,[a_2)),                 & \hbox{якщо~} j_1=i_2;\\
     (i_1,j_1-i_2+j_2,i_2-j_1+[a_2)), & \hbox{якщо~} j_1>i_2 \hbox{~i~} i_2-j_1+a_2\geqslant a_1;\\
     (i_1,j_1-i_2+j_2,[a_1)),         & \hbox{якщо~} j_1>i_2 \hbox{~i~} i_2-j_1+a_2<a_1.
   \end{array}
 \right.
\end{align*}

Отож,  відображення $\mathfrak{h}_k\colon \boldsymbol{B}_{\omega}^{\mathscr{F}}\to \boldsymbol{B}_{\omega}^{\mathscr{F}}$ є гомоморфізмом.
\end{proof}

Позаяк
\begin{equation*}
  \mathfrak{h}_k(\boldsymbol{B}_{\omega}^{\mathscr{F}})=\boldsymbol{B}_{\omega}^{\mathscr{F}_{\geqslant k}}=\left\{(i,j,[a))\in\boldsymbol{B}_{\omega}^{\mathscr{F}}\colon [a)\in \mathscr{F} \hbox{~~i~~} a\geqslant k\right\}
\end{equation*}
--- інверсна піднапівгрупа в $\boldsymbol{B}_{\omega}^{\mathscr{F}}$, яка є множиною нерухомих точок гомоморфізму $\mathfrak{h}_k\colon \boldsymbol{B}_{\omega}^{\mathscr{F}}\to \boldsymbol{B}_{\omega}^{\mathscr{F}}$, то справджується

\begin{proposition}\label{proposition-3.5}
Нехай $\mathscr{F}$ --- довільна ${\omega}$-замкнена сім'я індуктивних непорожніх підмножин у $\omega$ та $[k)\in\mathscr{F}$ для деякого числа $k\in\omega$. Тоді піднапівгрупа $\boldsymbol{B}_{\omega}^{\mathscr{F}_{\geqslant k}}$ є гомоморфним ретрактом напівгрупи $\boldsymbol{B}_{\omega}^{\mathscr{F}}$.
\end{proposition}

Нехай $\mathscr{F}=\left\{[0), [1),\ldots,[k)\right\}$ ---  сім'я  підмножин у $\omega$, де $k\geqslant 1$. Для довільного натурального числа $n<k$ означимо
\begin{equation*}
  \boldsymbol{B}_{\omega}^{\mathscr{F}_{\leqslant n}}= \left\{(i,j,[a))\in\boldsymbol{B}_{\omega}^{\mathscr{F}}\colon [a)\in \mathscr{F} \hbox{~~i~~} a\leqslant n\right\}.
\end{equation*}
Очевидно, що $\boldsymbol{B}_{\omega}^{\mathscr{F}_{\leqslant n}}$ --- інверсна піднапівгрупа в $\boldsymbol{B}_{\omega}^{\mathscr{F}}$.

З твердження \ref{proposition-3.2} випливає, що для сім'ї $\mathscr{F}=\left\{[0), [1)\right\}$ піднапівгрупа $\boldsymbol{B}_{\omega}^{\{[0)\}}$ є гомоморфним рет\-рак\-том напівгрупи $\boldsymbol{B}_{\omega}^{\mathscr{F}}$. Однак, з подальших тверджень випливає, що для довільного натурального числа $k\geqslant 2$ піднапівгрупа $\boldsymbol{B}_{\omega}^{\mathscr{F}_{\leqslant n}}$ не є гомоморфним ретрактом напівгрупи $\boldsymbol{B}_{\omega}^{\mathscr{F}}$ для довільного натурального числа $n<k$.

\begin{theorem}\label{theorem-3.6}
Нехай $\mathscr{F}$ --- довільна ${\omega}$-замкнена сім'я індуктивних непорожніх підмножин у $\omega$ та $\omega\in \mathscr{F}$. Тоді:
\begin{enumerate}
  \item\label{theorem-3.6(1)} якщо сім'я $\mathscr{F}$ --- нескінченна, то для довільного натурального числа $k$ напівгрупа $\boldsymbol{B}_{\omega}^{\mathscr{F}_{\leqslant k}}$ не є гомоморфним ретрактом напівгрупи $\boldsymbol{B}_{\omega}^{\mathscr{F}}$;
  \item\label{theorem-3.6(2)} якщо сім'я $\mathscr{F}$ --- скінченна та $\displaystyle\bigcap\mathscr{F}=[t)$ для деякого натурального числа $t\geqslant 2$, то для довільного натурального числа $k<t$ напівгрупа $\boldsymbol{B}_{\omega}^{\mathscr{F}_{\leqslant k}}$ не є гомоморфним ретрактом на\-пів\-гру\-пи $\boldsymbol{B}_{\omega}^{\mathscr{F}}$.
\end{enumerate}
\end{theorem}

\begin{proof}
Припустимо протилежне: хоча б в одному з випадків \eqref{theorem-3.6(1)}, чи \eqref{theorem-3.6(2)} напівгрупа $\boldsymbol{B}_{\omega}^{\mathscr{F}_{\leqslant n}}$ є гомоморфним ретрактом напівгрупи $\boldsymbol{B}_{\omega}^{\mathscr{F}}$.

Позаяк звуження напівгрупового гомоморфізму $h\colon S\to S$ на піднапівгрупу $T\subseteq S$ є знову гомоморфізмом, то, припустивши, що $\boldsymbol{B}_{\omega}^{\mathscr{F}_{\leqslant k}}$ є гомоморфним ретрактом напівгрупи $\boldsymbol{B}_{\omega}^{\mathscr{F}}$, який породжується деяким гомоморфізмом $\mathfrak{h}\colon \boldsymbol{B}_{\omega}^{\mathscr{F}}\to \boldsymbol{B}_{\omega}^{\mathscr{F}}$, отримуємо, що піднапівгрупа $\boldsymbol{B}_{\omega}^{\mathscr{F}_{\leqslant k}}$ є гомоморфним ретрактом напівгрупи $\boldsymbol{B}_{\omega}^{\mathscr{F}_{\leqslant k+1}}$, який породжується деякою гомоморфною ретракцією $\mathfrak{h}^k\colon \boldsymbol{B}_{\omega}^{\mathscr{F}_{\leqslant k+1}}\to \boldsymbol{B}_{\omega}^{\mathscr{F}_{\leqslant k}}$.

З визначення природного часткового порядку на в'язці $E(\boldsymbol{B}_{\omega}^{\mathscr{F}_{\leqslant k+1}})$ (див. твердження \ref{proposition-2.5} і \ref{proposition-2.9}) випливає, що
\begin{equation*}
  (1,1,[k))\preccurlyeq(0,0,[k+1))\preccurlyeq(0,0,[k)).
\end{equation*}
Позаяк гомоморфізм інверсних напівгруп зберігає природний частковий порядок на їх в'язках у бік образу (див. \cite[твердження~1.4.21(6)]{Lawson-1998}), то
\begin{equation}\label{eq-3.1}
  (1,1,[k))\preccurlyeq \mathfrak{h}^k(0,0,[k+1))\preccurlyeq(0,0,[k)).
\end{equation}
З нерівностей \eqref{eq-3.1} та визначення природного часткового порядку на $E(\boldsymbol{B}_{\omega}^{\mathscr{F}_{\leqslant k+1}})$ (див. твердження \ref{proposition-2.5} і \ref{proposition-2.9}) випливає, що виконується лише один з випадків:
\begin{enumerate}
  \item\label{theorem-3.6-proof(1)} $\mathfrak{h}^k(0,0,[k+1))=(0,0,[k))$;
  \item\label{theorem-3.6-proof(2)} $\mathfrak{h}^k(0,0,[k+1))=(1,1,[k))$;
  \item\label{theorem-3.6-proof(3)} $\mathfrak{h}^k(0,0,[k+1))=(1,1,[k-1))$.
\end{enumerate}

\medskip

Припустимо, що виконується випадок \eqref{theorem-3.6-proof(1)}: $\mathfrak{h}^k(0,0,[k+1))=(0,0,[k))$. З твердження \ref{proposition-2.6} випливає, що $\mathfrak{h}^k(i,j,[k+1))\in \boldsymbol{B}_{\omega}^{\{[k)\}}$ для всіх $i,j\in\omega$, оскільки за твердженням~3 з \cite{Gutik-Mykhalenych-2020} напівгрупа $\boldsymbol{B}_{\omega}^{\{[k+1)\}}$ ізоморфна біциклічній напівгрупі. Також з тверджень \ref{proposition-2.5} і \ref{proposition-2.9} випливає, що $\mathfrak{h}^k(1,1,[k+1))\preccurlyeq(1,1,[k))$.

За твердженням~1.4.21(2) з~\cite{Lawson-1998}, $\mathfrak{h}^k(1,1,[k+1))$~--- ідемпотент напівгрупи $\boldsymbol{B}_{\omega}^{\mathscr{F}}$, як гомоморфний образ ідемпотента $(1,1,[k+1))$, а отже, $\mathfrak{h}^k(1,1,[k+1))=(i,i,[k))$ для деякого натурального чис\-ла~$i$. Позаяк
\begin{equation*}
  (0,0,[k+1))=(0,1,[k+1))\cdot(1,0,[k+1)),
\end{equation*}
то за твердженням~1.4.21(1) \cite{Lawson-1998} маємо, що
\begin{align*}
  (0,0,[k))&=\mathfrak{h}^k(0,0,[k+1))= \\
  &=\mathfrak{h}^k((0,1,[k+1))\cdot(1,0,[k+1)))=\\
  &=\mathfrak{h}^k(0,1,[k+1))\cdot\mathfrak{h}^k(1,0,[k+1))=\\
  &=\mathfrak{h}^k(0,1,[k+1))\cdot\mathfrak{h}^k((0,1,[k+1))^{-1})=\\
  &=\mathfrak{h}^k(0,1,[k+1))\cdot(\mathfrak{h}^k(0,1,[k+1)))^{-1},
\end{align*}
і, прийнявши $\mathfrak{h}^k(0,1,[k+1))=(m,n,[k))$, за лемою 4 з~\cite{Gutik-Mykhalenych-2020} отримуємо, що
\begin{equation*}
  (m,n,[k))\cdot(m,n,[k))^{-1}=(m,n,[k))\cdot(n,m,[k))=(m,m,[k))=(0,0,[k)),
\end{equation*}
а отже $\mathfrak{h}^k(0,1,[k+1))=(0,n,[k))$, для деякого натурального числа $n$. Аналогічно з рівностей
\begin{align*}
  (i,i,[k))&= \mathfrak{h}^k(1,1,[k+1))=\\
  &=\mathfrak{h}^k((1,0,[k+1))\cdot(0,1,[k+1)))=\\
  &=\mathfrak{h}^k(1,0,[k+1)))\cdot\mathfrak{h}^k(0,1,[k+1))=\\
  &=\mathfrak{h}^k((0,1,[k+1))^{-1})\cdot\mathfrak{h}^k(0,1,[k+1))=\\
  &=(\mathfrak{h}^k(0,1,[k+1)))^{-1}\cdot\mathfrak{h}^k(0,1,[k+1))=\\
  &=(0,n,[k))^{-1}\cdot(0,n,[k))=\\
  &=(n,0,[k))\cdot(0,n,[k))=\\
  &=(n,n,[k))
\end{align*}
випливає, що $n=i$.

Припустимо, що $i\geqslant 2$. Тоді з
\begin{align*}
  (3,3,[k))&\preccurlyeq(2,2,[k+1))=\\
  &=(2,0,[k+1))\cdot(0,2,[k+1))=\\
  &=(1,0,[k+1))^2\cdot(0,1,[k+1))^2
\end{align*}
випливає, що
\begin{align*}
  (3,3,[k))&=\mathfrak{h}^k(3,3,[k))\preccurlyeq\\
  &\preccurlyeq\mathfrak{h}^k(2,2,[k+1))=\\
  &=\mathfrak{h}^k((1,0,[k+1))^2\cdot(0,1,[k+1))^2)=\\
  &=(\mathfrak{h}^k(1,0,[k+1)))^2\cdot(\mathfrak{h}^k(0,1,[k+1)))^2=\\
  &=(i,0,[k))^2\cdot(0,i,[k))^2=\\
  &=(2i,0,[k))\cdot(0,2i,[k))=\\
  &=(2i,2i,[k)),
\end{align*}
а це суперечить визначенню природного часткового порядку на напівґратці $E(\boldsymbol{B}_{\omega}^{\mathscr{F}_{\leqslant k+1}})$ (див. тверд\-жен\-ня \ref{proposition-2.5} і \ref{proposition-2.9}). Отож, отримуємо, що $\mathfrak{h}^k(0,1,[k+1))=(0,1,[k))$ і за твердженням~1.4.21(1) з \cite{Lawson-1998}, $\mathfrak{h}^k(1,0,[k+1))=(1,0,[k))$. Тоді з означення напівгрупової операції на $\boldsymbol{B}_{\omega}^{\mathscr{F}}$ випливає, що 
\begin{equation*}
\mathfrak{h}^k(p,q,[k+1))=(p,q,[k)),
\end{equation*}
для всіх $p,q\in\omega$.

Припустимо, що $i>j$. Тоді для $a=0,1,\ldots,k$ маємо
\begin{align*}
  \mathfrak{h}^k((i,i,[a))\cdot(j,j,[k+1)))&=\mathfrak{h}^k(i,i,[a)\cap(j-i+[k+1)))= \\
  &=
  \left\{
    \begin{array}{ll}
      \mathfrak{h}^k(i,i,j-i+[k+1)), & \hbox{якщо~} 0\leqslant a<j-i+k+1;\\
      \mathfrak{h}^k(i,i,[a)),       & \hbox{якщо~} a\geqslant j-i+k+1
    \end{array}
  \right.
  =\\
  &=
  \left\{
    \begin{array}{ll}
      (i,i,j-i+[k+1)), & \hbox{якщо~} 0\leqslant a<j-i+k+1;\\
      (i,i,[a)),       & \hbox{якщо~} a\geqslant j-i+k+1
    \end{array}
  \right.
\end{align*}
i
\begin{align*}
  \mathfrak{h}^k(i,i,[a))\cdot\mathfrak{h}^k(j,j,[k+1))&=(i,i,[a))\cdot(j,j,[k))= \\
  &=(i,i,[a)\cap(j-i+[k)))=\\
  &=
  \left\{
    \begin{array}{ll}
      (i,i,j-i+[k)), & \hbox{якщо~} 0\leqslant a<j-i+k;\\
      (i,i,[a)),       & \hbox{якщо~} a\geqslant j-i+k.
    \end{array}
  \right.
\end{align*}
Отож, якщо $a=j-i+k$ (а такий випадок завжди можливий зокрема коли $a=-1+k$ тобто $i=j+1$), то отримуємо, що
\begin{equation*}
  \mathfrak{h}^k((i,i,[a))\cdot(j,j,[k+1)))=(i,i,j-i+[k+1))=(i,i,a-k+[k+1))=(i,i,[a+1))
\end{equation*}
і
\begin{equation*}
  \mathfrak{h}^k(i,i,[a))\cdot\mathfrak{h}^k(j,j,[k+1))=(i,i,[a)),
\end{equation*}
а це суперечить тому, що відображення $\mathfrak{h}^k\colon \boldsymbol{B}_{\omega}^{\mathscr{F}_{\leqslant k+1}}\to \boldsymbol{B}_{\omega}^{\mathscr{F}_{\leqslant k}}$ є гомоморфізмом. З отриманого протиріччя випливає, що умова $\mathfrak{h}^k(0,0,[k+1))=(0,0,[k))$ не може виконуватися.

\medskip

Припустимо, що виконується випадок \eqref{theorem-3.6-proof(2)}: $\mathfrak{h}^k(0,0,[k+1))=(1,1,[k))$. З твердження \ref{proposition-2.6} випливає, що $\mathfrak{h}^k(i,j,[k+1))\in \boldsymbol{B}_{\omega}^{\{[k)\}}$ для всіх $i,j\in\omega$, оскільки за твердженням~3 з \cite{Gutik-Mykhalenych-2020} напівгрупа $\boldsymbol{B}_{\omega}^{\{[k+1)\}}$ ізоморфна біциклічній напівгрупі. Також з тверджень \ref{proposition-2.5} і \ref{proposition-2.9} випливає, що $\mathfrak{h}^k(1,1,[k+1))\preccurlyeq(2,2,[k))$.

За твердженням~1.4.21(2) з \cite{Lawson-1998} маємо, що $\mathfrak{h}^k(1,1,[k+1))$~--- ідемпотент напівгрупи $\boldsymbol{B}_{\omega}^{\mathscr{F}}$, а отже, $\mathfrak{h}^k(1,1,[k+1))=(i,i,[k))$ для деякого натурального числа $i\geqslant 3$. Позаяк
\begin{equation*}
  (0,0,[k+1))=(0,1,[k+1))\cdot(1,0,[k+1)),
\end{equation*}
то за твердженням~1.4.21(1) \cite{Lawson-1998} отримуємо, що
\begin{align*}
  (1,1,[k))&=\mathfrak{h}^k(0,0,[k+1))= \\
  &=\mathfrak{h}^k((0,1,[k+1))\cdot(1,0,[k+1)))=\\
  &=\mathfrak{h}^k(0,1,[k+1))\cdot\mathfrak{h}^k(1,0,[k+1))=\\
  &=\mathfrak{h}^k(0,1,[k+1))\cdot\mathfrak{h}^k((0,1,[k+1))^{-1})=\\
  &=\mathfrak{h}^k(0,1,[k+1))\cdot(\mathfrak{h}^k(0,1,[k+1)))^{-1},
\end{align*}
і, прийнявши $\mathfrak{h}^k(0,1,[k+1))=(m,n,[k))$, з леми 4 \cite{Gutik-Mykhalenych-2020} випливає, що
\begin{equation*}
  (m,n,[k))\cdot(m,n,[k))^{-1}=(m,n,[k))\cdot(n,m,[k))=(m,m,[k))=(1,1,[k)),
\end{equation*}
звідки отримуємо, що $\mathfrak{h}^k(0,1,[k+1))=(1,n,[k))$, для деякого натурального числа $n\geqslant 2$. Справді, $n\neq 1$, оскільки в цьому випадку за теоремою~\ref{theorem-2.7} гомоморфізм $\mathfrak{h}^k\colon \boldsymbol{B}_{\omega}^{\mathscr{F}_{\leqslant k+1}}\to \boldsymbol{B}_{\omega}^{\mathscr{F}_{\leqslant k}}$ був би груповим. Також, з нерівності $(1,1,[k+1))\preccurlyeq(0,0,[k+1))$ в $E(\boldsymbol{B}_{\omega}^{\mathscr{F}_{\leqslant k+1}})$ випливає, що не існує ідемпотента вигляду $(p,p,[k+1))$, де $p\geqslant 2$, такого, що $\mathfrak{h}^k(p,p,[k+1))=(0,0,[k)))$, а отже за твердженням~1.4.21(3) з \cite{Lawson-1998} не існує елемента $(s,t,[k+1))$ напівгрупи $\boldsymbol{B}_{\omega}^{\mathscr{F}_{\leqslant k+1}}$ такого, що $\mathfrak{h}^k(s,t,[k+1))=(0,0,[k))$.

Аналогічно,  з рівностей
\begin{align*}
  (i,i,[k))&= \mathfrak{h}^k(1,1,[k+1))=\\
  &=\mathfrak{h}^k((1,0,[k+1))\cdot(0,1,[k+1)))=\\
  &=\mathfrak{h}^k(1,0,[k+1)))\cdot\mathfrak{h}^k(0,1,[k+1))=\\
  &=\mathfrak{h}^k((0,1,[k+1))^{-1})\cdot\mathfrak{h}^k(0,1,[k+1))=\\
  &=(\mathfrak{h}^k(0,1,[k+1)))^{-1}\cdot\mathfrak{h}^k(0,1,[k+1))=\\
  &=(1,n,[k))^{-1}\cdot(1,n,[k))=\\
  &=(n,1,[k))\cdot(1,n,[k))=\\
  &=(n,n,[k))
\end{align*}
отримуємо, що $n=i\geqslant 2$.

Припустимо, що $i\geqslant 3$. Тоді з
\begin{align*}
  (3,3,[k))&\preccurlyeq(2,2,[k+1))=\\
  &=(2,0,[k+1))\cdot(0,2,[k+1))=\\
  &=(1,0,[k+1))^2\cdot(0,1,[k+1))^2
\end{align*}
випливає, що
\begin{align*}
  (3,3,[k))&=\mathfrak{h}^k(3,3,[k))\preccurlyeq\\
  &\preccurlyeq\mathfrak{h}^k(2,2,[k+1))=\\
  &=\mathfrak{h}^k((1,0,[k+1))^2\cdot(0,1,[k+1))^2)=\\
  &=(\mathfrak{h}^k(1,0,[k+1)))^2\cdot(\mathfrak{h}^k(0,1,[k+1)))^2=\\
  &=(i,1,[k))^2\cdot(1,i,[k))^2=\\
  &=(2i-1,1,[k))\cdot(1,2i-1,[k))=\\
  &=(2i-1,2i-1,[k)),
\end{align*}
однак це суперечить твердженням \ref{proposition-2.5} і \ref{proposition-2.9}, оскільки $(3,3,[k))\not\preccurlyeq(2i-1,2i-1,[k))$ у випадку $i\geqslant 3$. Отож, $i=2$, а отже отримуємо, що $\mathfrak{h}^k(1,0,[k+1))=(1,2,[k))$ i $\mathfrak{h}^k(0,1,[k+1))=(2,1,[k))$. Позаяк за твердженням~3 з \cite{Gutik-Mykhalenych-2020} напівгрупа $\boldsymbol{B}_{\omega}^{\{[k+1)\}}$ ізоморфна біциклічній напівгрупі, то з того, що звуження гомоморфізму $\mathfrak{h}^k\colon \boldsymbol{B}_{\omega}^{\mathscr{F}_{\leqslant k+1}}\to \boldsymbol{B}_{\omega}^{\mathscr{F}_{\leqslant k}}$ на піднапівгрупу $\boldsymbol{B}_{\omega}^{\{[k+1)\}}$ є ін'єктивним відоб\-ра\-жен\-ням, з означення напівгрупової операції в $\boldsymbol{B}_{\omega}^{\mathscr{F}}$ і з попередніх міркувань випливає, що 
\begin{equation*}
\mathfrak{h}^k(p,q,[k+1))=(p+1,q+1,[k)),
\end{equation*}
для довільних $p,q\in\omega$.

Припустимо, що $i>j$. Тоді для $a=0,1,\ldots,k$ маємо
\begin{align*}
  \mathfrak{h}^k((i,i,[a))\cdot(j,j,[k+1)))&=\mathfrak{h}^k(i,i,[a)\cap(j-i+[k+1)))= \\
  &=
  \left\{
    \begin{array}{ll}
      \mathfrak{h}^k(i,i,j-i+[k+1)), & \hbox{якщо~} 0\leqslant a<j-i+k+1;\\
      \mathfrak{h}^k(i,i,[a)),       & \hbox{якщо~} a\geqslant j-i+k+1
    \end{array}
  \right.
  =\\
  &=
  \left\{
    \begin{array}{ll}
      (i+1,i+1,j-i+[k+1)), & \hbox{якщо~} 0\leqslant a<j-i+k+1;\\
      (i+1,i+1,[a)),       & \hbox{якщо~} a\geqslant j-i+k+1
    \end{array}
  \right.
\end{align*}
i
\begin{align*}
  \mathfrak{h}^k(i,i,[a))\cdot\mathfrak{h}^k(j,j,[k+1))&=(i+1,i+1,[a))\cdot(j+1,j+1,[k))= \\
  &=(i+1,i+1,[a)\cap(j-i+[k)))=\\
  &=
  \left\{
    \begin{array}{ll}
      (i+1,i+1,j-i+[k)), & \hbox{якщо~} 0\leqslant a<j-i+k;\\
      (i+1,i+1,[a)),       & \hbox{якщо~} a\geqslant j-i+k.
    \end{array}
  \right.
\end{align*}
Отож, якщо $a=j-i+k$ (а такий випадок завжди можливий зокрема коли $a=-1+k$ тобто $i=j+1$), то отримуємо, що
\begin{equation*}
  \mathfrak{h}^k((i,i,[a))\cdot(j,j,[k+1)))=(i+1,i+1,j-i+[k+1))=(i+1,i+1,a-k+[k+1))=(i+1,i+1,[a+1))
\end{equation*}
і
\begin{equation*}
  \mathfrak{h}^k(i,i,[a))\cdot\mathfrak{h}^k(j,j,[k+1))=(i+1,i+1,[a)),
\end{equation*}
а це суперечить тому, що відображення $\mathfrak{h}^k\colon \boldsymbol{B}_{\omega}^{\mathscr{F}_{\leqslant k+1}}\to \boldsymbol{B}_{\omega}^{\mathscr{F}_{\leqslant k}}$ є гомоморфізмом. З отриманого протиріччя випливає, що умова $\mathfrak{h}^k(0,0,[k+1))=(1,1,[k))$ не може виконуватися.

\medskip

Припустимо, що виконується випадок \eqref{theorem-3.6-proof(3)}: $\mathfrak{h}^k(0,0,[k+1))=(1,1,[k-1))$. З твердження \ref{proposition-2.6} випливає, що $\mathfrak{h}^k(i,j,[k+1))\in \boldsymbol{B}_{\omega}^{\{[k-1)\}}$ для всіх $i,j\in\omega$, оскільки за твердженням~3 з \cite{Gutik-Mykhalenych-2020} напівгрупа $\boldsymbol{B}_{\omega}^{\{[k+1)\}}$ ізоморфна біциклічній напівгрупі. Також з тверджень \ref{proposition-2.5} і \ref{proposition-2.9} випливає, що виконується нерівність $\mathfrak{h}^k(1,1,[k+1))\preccurlyeq(2,2,[k-1))$.

За твердженням~1.4.21(2) з \cite{Lawson-1998} маємо, що $\mathfrak{h}^k(1,1,[k+1))$~--- ідемпотент напівгрупи $\boldsymbol{B}_{\omega}^{\mathscr{F}}$, а отже, $\mathfrak{h}^k(1,1,[k+1))=(i,i,[k-1))$ для деякого натурального числа $i\geqslant 2$. Позаяк
\begin{equation*}
  (0,0,[k+1))=(0,1,[k+1))\cdot(1,0,[k+1)),
\end{equation*}
то за твердженням~1.4.21(1) \cite{Lawson-1998} маємо, що
\begin{align*}
  (1,1,[k-1))&=\mathfrak{h}^k(0,0,[k+1))= \\
  &=\mathfrak{h}^k((0,1,[k+1))\cdot(1,0,[k+1)))=\\
  &=\mathfrak{h}^k(0,1,[k+1))\cdot\mathfrak{h}^k(1,0,[k+1))=\\
  &=\mathfrak{h}^k(0,1,[k+1))\cdot\mathfrak{h}^k((0,1,[k+1))^{-1})=\\
  &=\mathfrak{h}^k(0,1,[k+1))\cdot(\mathfrak{h}^k(0,1,[k+1)))^{-1},
\end{align*}
і, прийнявши $\mathfrak{h}^k(0,1,[k+1))=(m,n,[k-1))$, з леми 4 \cite{Gutik-Mykhalenych-2020} випливає, що
\begin{equation*}
  (m,n,[k-1))\cdot(m,n,[k-1))^{-1}=(m,n,[k-1))\cdot(n,m,[k-1))=(m,m,[k))=(1,1,[k-1)),
\end{equation*}
а отже, $\mathfrak{h}^k(0,1,[k+1))=(1,n,[k-1))$, для деякого натурального числа $n\geqslant 2$. Справді, $n\neq 1$, оскільки в цьому випадку за теоремою~\ref{theorem-2.7} гомоморфізм $\mathfrak{h}^k\colon \boldsymbol{B}_{\omega}^{\mathscr{F}_{\leqslant k+1}}\to \boldsymbol{B}_{\omega}^{\mathscr{F}_{\leqslant k}}$ був би груповим. Також, з нерівності $(1,1,[k+1))\preccurlyeq(0,0,[k+1))$ в $E(\boldsymbol{B}_{\omega}^{\mathscr{F}_{\leqslant k+1}})$ випливає, що не існує ідемпотента вигляду $(p,p,[k+1))$, де $p\geqslant 2$, такого, що $\mathfrak{h}^k(p,p,[k+1))=(0,0,[k-1)))$, а отже за твердженням~1.4.21(3) з \cite{Lawson-1998} не існує елемента $(s,t,[k+1))$ напівгрупи $\boldsymbol{B}_{\omega}^{\mathscr{F}_{\leqslant k+1}}$ такого, що $\mathfrak{h}^k(s,t,[k+1))=(0,0,[k-1))$.

Аналогічно,  з рівностей
\begin{align*}
  (i,i,[k-1))&= \mathfrak{h}^k(1,1,[k+1))=\\
  &=\mathfrak{h}^k((1,0,[k+1))\cdot(0,1,[k+1)))=\\
  &=\mathfrak{h}^k(1,0,[k+1)))\cdot\mathfrak{h}^k(0,1,[k+1))=\\
  &=\mathfrak{h}^k((0,1,[k+1))^{-1})\cdot\mathfrak{h}^k(0,1,[k+1))=\\
  &=(\mathfrak{h}^k(0,1,[k+1)))^{-1}\cdot\mathfrak{h}^k(0,1,[k+1))=\\
  &=(1,n,[k-1))^{-1}\cdot(1,n,[k-1))=\\
  &=(n,1,[k-1))\cdot(1,n,[k-1))=\\
  &=(n,n,[k-1))
\end{align*}
отримуємо, що $n=i\geqslant 2$.

Припустимо, що $i\geqslant 3$. Тоді з
\begin{align*}
  (6,6,[k-1))&\preccurlyeq(4,4,[k+1))=\\
  &=(4,0,[k+1))\cdot(0,4,[k+1))=\\
  &=(1,0,[k+1))^4\cdot(0,1,[k+1))^4
\end{align*}
випливає, що
\begin{align*}
  (6,6,[k-1))&=\mathfrak{h}^k(6,6,[k-1))\preccurlyeq\\
  &\preccurlyeq\mathfrak{h}^k(4,4,[k+1))=\\
  &=\mathfrak{h}^k((1,0,[k+1))^4\cdot(0,1,[k+1))^4)=\\
  &=(\mathfrak{h}^k(1,0,[k+1)))^4\cdot(\mathfrak{h}^k(0,1,[k+1)))^4=\\
  &=(i,1,[k-1))^4\cdot(1,i,[k-1))^4=\\
  &=(4i-3,1,[k-1))\cdot(1,4i-3,[k-1))=\\
  &=(4i-3,4i-3,[k-1)),
\end{align*}
однак це суперечить твердженням \ref{proposition-2.5} і \ref{proposition-2.9}, оскільки $(6,6,[k-1))\not\preccurlyeq(4i-3,4i-3,[k-1))$ у випадку $i\geqslant 3$. Отож, $i=2$, а отже отримуємо, що $\mathfrak{h}^k(1,0,[k+1))=(1,2,[k-1))$ i $\mathfrak{h}^k(0,1,[k+1))=(2,1,[k))$. Позаяк за твердженням~3 з \cite{Gutik-Mykhalenych-2020} напівгрупа $\boldsymbol{B}_{\omega}^{\{[k+1)\}}$ ізоморфна біциклічній напівгрупі, то з того, що звуження гомоморфізму $\mathfrak{h}^k\colon \boldsymbol{B}_{\omega}^{\mathscr{F}_{\leqslant k+1}}\to \boldsymbol{B}_{\omega}^{\mathscr{F}_{\leqslant k}}$ на піднапівгрупу $\boldsymbol{B}_{\omega}^{\{[k+1)\}}$ є ін'єктивним відображенням, з означення напівгрупової операції в $\boldsymbol{B}_{\omega}^{\mathscr{F}}$ і з попередніх міркувань випливає, що 
\begin{equation*}
\mathfrak{h}^k(p,q,[k+1))=(p+1,q+1,[k-1)),
\end{equation*}
для довільних $p,q\in\omega$.

Припустимо, що $i>j$. Тоді для $a=0,1,\ldots,k$ маємо
\begin{align*}
  \mathfrak{h}^k((i,i,[a))\cdot(j,j,[k+1)))&=\mathfrak{h}^k(i,i,[a)\cap(j-i+[k+1)))= \\
  &=
  \left\{
    \begin{array}{ll}
      \mathfrak{h}^k(i,i,j-i+[k+1)), & \hbox{якщо~} 0\leqslant a<j-i+k+1;\\
      \mathfrak{h}^k(i,i,[a)),       & \hbox{якщо~} a\geqslant j-i+k+1
    \end{array}
  \right.
  =\\
  &=
  \left\{
    \begin{array}{ll}
      (i+1,i+1,j-i+[k+1)), & \hbox{якщо~} 0\leqslant a<j-i+k+1;\\
      (i+1,i+1,[a)),       & \hbox{якщо~} a\geqslant j-i+k+1
    \end{array}
  \right.
\end{align*}
i
\begin{align*}
  \mathfrak{h}^k(i,i,[a))\cdot\mathfrak{h}^k(j,j,[k+1))&=(i+1,i+1,[a))\cdot(j+1,j+1,[k-1))= \\
  &=(i+1,i+1,[a)\cap(j-i+[k-1)))=\\
  &=
  \left\{
    \begin{array}{ll}
      (i+1,i+1,j-i+[k-1)), & \hbox{якщо~} 0\leqslant a<j-i+k-1;\\
      (i+1,i+1,[a)),       & \hbox{якщо~} a\geqslant j-i+k-1.
    \end{array}
  \right.
\end{align*}
Отож, якщо $a=j-i+k$ (а такий випадок завжди можливий зокрема коли $a=-1+k$ тобто $i=j+1$), то отримуємо, що
\begin{equation*}
  \mathfrak{h}^k((i,i,[a))\cdot(j,j,[k+1)))=(i+1,i+1,j-i+[k+1))=(i+1,i+1,a-k+[k+1))=(i+1,i+1,[a+1))
\end{equation*}
і
\begin{equation*}
  \mathfrak{h}^k(i,i,[a))\cdot\mathfrak{h}^k(j,j,[k+1))=(i+1,i+1,[a)),
\end{equation*}
а це суперечить тому, що відображення $\mathfrak{h}^k\colon \boldsymbol{B}_{\omega}^{\mathscr{F}_{\leqslant k+1}}\to \boldsymbol{B}_{\omega}^{\mathscr{F}_{\leqslant k}}$ є гомоморфізмом. З отриманого протиріччя випливає, що умова $\mathfrak{h}^k(0,0,[k+1))=(1,1,[k-1))$ не може виконуватися.

Таким чином, жоден з випадків \eqref{theorem-3.6-proof(1)}, \eqref{theorem-3.6-proof(2)}, чи \eqref{theorem-3.6-proof(3)} не виконується, а отже не існує гомоморфної ретракції $\mathfrak{h}^k\colon \boldsymbol{B}_{\omega}^{\mathscr{F}_{\leqslant k+1}}\to \boldsymbol{B}_{\omega}^{\mathscr{F}_{\leqslant k}}$, звідки і випливають твердження теореми.
\end{proof}

З тверджень \ref{proposition-3.2}, \ref{proposition-3.5} і теореми \ref{theorem-3.6} випливає теорема \ref{theorem-3.7}, яка описує всі нетривіальні гомоморфні ретракти напівгрупи $\boldsymbol{B}_{\omega}^{\mathscr{F}}$.

\begin{theorem}\label{theorem-3.7}
Нехай $\mathscr{F}$ --- довільна ${\omega}$-замкнена сім'я індуктивних непорожніх підмножин у $\omega$. Якщо сім'я $\mathscr{F}$ --- нескінченна, то для довільного натурального числа $i$ та для довільного $j\in\omega$ напівгрупи $\boldsymbol{B}_{\omega}^{\mathscr{F}_{\geqslant i}}$ i $\boldsymbol{B}_{\omega}^{\{[j)\}}$ і лише вони, є нетривіальними гомоморфними ретрак\-та\-ми напівгрупи $\boldsymbol{B}_{\omega}^{\mathscr{F}}$.
Якщо ж сім'я $\mathscr{F}$ --- скінченна та $\displaystyle\bigcap\mathscr{F}=[k)$ для деякого натурального числа $k\geqslant 2$, то для довільного натурального числа $i\leqslant k$ та для довільного $j=0,1,\ldots,k$ напівгрупи $\boldsymbol{B}_{\omega}^{\mathscr{F}_{\geqslant i}}$ i $\boldsymbol{B}_{\omega}^{\{[j)\}}$  і лише вони, є нетривіальними гомоморфними ретрактами напівгрупи $\boldsymbol{B}_{\omega}^{\mathscr{F}}$, а у випадку $\displaystyle\bigcap\mathscr{F}=[1)$ напівгрупи $\boldsymbol{B}_{\omega}^{\{[0)\}}$ і $\boldsymbol{B}_{\omega}^{\{[1)\}}$  і лише вони, є нетривіальними гомоморфними ретрактами напівгрупи $\boldsymbol{B}_{\omega}^{\mathscr{F}}$.
\end{theorem}

\begin{remark}\label{remark-3.8}
За твердженням \ref{proposition-2.1} для довільної ${\omega}$-замкненої сім'ї $\mathscr{F}$ індуктивних непорожніх підмножин у $\omega$ напівгрупа $\boldsymbol{B}_{\omega}^{\mathscr{F}}$ ізоморфна напівгрупі $\boldsymbol{B}_{\omega}^{\mathscr{F}_0}$, де  $\mathscr{F}_0$ --- ${\omega}$-замкнена сім'я індуктивних непорожніх підмножин у $\omega$ та $\omega\in \mathscr{F}_0$. Отож, теорема \ref{theorem-3.7} описує всі нетривіальні гомоморфні ретракти напівгрупи $\boldsymbol{B}_{\omega}^{\mathscr{F}}$ за модулем ізоморфізму напівгруп $\boldsymbol{B}_{\omega}^{\mathscr{F}}$ і $\boldsymbol{B}_{\omega}^{\mathscr{F}_0}$, який описаний у доведенні твердження \ref{proposition-2.1}.
\end{remark}

Завершимо цю працю твердженнями, які описують ізоморфізм між напівгрупами $\boldsymbol{B}_{\omega}^{\mathscr{F}_1}$ і $\boldsymbol{B}_{\omega}^{\mathscr{F}_2}$ у випадку, коли $\mathscr{F}_1$ і $\mathscr{F}_2$ --- ${\omega}$-замкнені сім'ї індуктивних непорожніх підмножин у $\omega$.

\begin{theorem}\label{theorem-3.9}
Нехай $\mathscr{F}_1$ і $\mathscr{F}_2$ --- ${\omega}$-замкнені сім'ї індуктивних непорожніх підмножин у $\omega$. Тоді такі умови еквівалентні:
\begin{enumerate}
  \item\label{theorem-3.9(1)} напівгрупи $\boldsymbol{B}_{\omega}^{\mathscr{F}_1}$ і $\boldsymbol{B}_{\omega}^{\mathscr{F}_2}$ ізоморфні;
  \item\label{theorem-3.9(2)} сім'ї $\mathscr{F}_1$ і $\mathscr{F}_2$ рівнопотужні;
  \item\label{theorem-3.9(3)} існує ціле число $n$ таке, що $\mathscr{F}_1=\left\{n+F\colon F\in\mathscr{F}_2\right\}$.
\end{enumerate}
\end{theorem}

\begin{proof}
Імплікація \eqref{theorem-3.9(3)}$\Rightarrow$\eqref{theorem-3.9(2)} очевидна, а імплікація \eqref{theorem-3.9(3)}$\Rightarrow$\eqref{theorem-3.9(1)} випливає з твердження~\ref{proposition-2.1}.

\smallskip

\eqref{theorem-3.9(2)}$\Rightarrow$\eqref{theorem-3.9(3)}. Припустимо, що сім'ї $\mathscr{F}_1$ і $\mathscr{F}_2$ рівнопотужні. Приймемо $n_1=\min\displaystyle\bigcup\mathscr{F}_1$ i $n_2=\min\displaystyle\bigcup\mathscr{F}_2$. Тоді $[n_1)\in \mathscr{F}_1$ i $[n_2)\in \mathscr{F}_2$. Якщо сім'ї $\mathscr{F}_1$ і $\mathscr{F}_2$ --- скінченні, то існують максимальні цілі числа $n_1^0$ i $n_2^0$ такі, що $[n_1^0)\in \mathscr{F}_1$ i $[n_2^0)\in \mathscr{F}_2$, але $[n_1^0+1)\notin \mathscr{F}_1$ i $[n_2^0+1)\notin \mathscr{F}_2$. З леми~\ref{lemma-2.3} випливає, що  $[i)\in \mathscr{F}_1$ i $[j)\in \mathscr{F}_2$ для довільних цілих чисел $i=n_1,\ldots,n_1^0$ i $j=n_2,\ldots,n_2^0$. Тоді з рівнопотужності сімей $\mathscr{F}_1$ і $\mathscr{F}_2$ отримуємо, що
\begin{equation*}
  \left|\mathscr{F}_1\right|=n_1^0-n_1+1=n_2^0-n_2+1=\left|\mathscr{F}_2\right|,
\end{equation*}
і прийнявши $n=n_1-n_2=n_1^0-n_2^0$, отримуємо, що $\mathscr{F}_1=\left\{n+F\colon F\in\mathscr{F}_2\right\}$.

Якщо сім'ї $\mathscr{F}_1$ і $\mathscr{F}_2$ --- нескінченні, то з леми~\ref{lemma-2.3} випливає, що  $[i)\in \mathscr{F}_1$ i $[j)\in \mathscr{F}_2$ для довільних цілих чисел $i\geqslant n_1$ i $j\geqslant n_2$, а отже $\mathscr{F}_1=\left\{n+F\colon F\in\mathscr{F}_2\right\}$ для $n=n_1-n_2$.

\smallskip

\eqref{theorem-3.9(1)}$\Rightarrow$\eqref{theorem-3.9(2)}. Припустимо, що відображення $\mathfrak{h}\colon \boldsymbol{B}_{\omega}^{\mathscr{F}_1}\rightarrow\boldsymbol{B}_{\omega}^{\mathscr{F}_2}$ є ізоморфізмом. Приймемо $n_1=\min\displaystyle\bigcup\mathscr{F}_1$ i $n_2=\min\displaystyle\bigcup\mathscr{F}_2$. За теоремою~4 з \cite{Gutik-Mykhalenych-2020} елементи $(0,0,[n_1))$ i $(0,0,[n_2))$ є одиницями напівгруп $\boldsymbol{B}_{\omega}^{\mathscr{F}_1}$ і $\boldsymbol{B}_{\omega}^{\mathscr{F}_2}$, відповідно. З твердження~\ref{proposition-2.6} випливає, що гомоморфний образ $\mathfrak{h}(\boldsymbol{B}_{\omega}^{\{[n_1)\}})$ міститься в піднапівгрупі $\boldsymbol{B}_{\omega}^{\{[n_2)\}}$ напівгрупи $\boldsymbol{B}_{\omega}^{\mathscr{F}_2}$. Аналогічно, гомоморфний образ $\mathfrak{h}^{-1}(\boldsymbol{B}_{\omega}^{\{[n_2)\}})$ стосовно обереного відображення $\mathfrak{h}^{-1}$ до ізо\-мор\-фіз\-му $\mathfrak{h}\colon \boldsymbol{B}_{\omega}^{\mathscr{F}_1}\rightarrow\boldsymbol{B}_{\omega}^{\mathscr{F}_2}$, міститься в піднапівгрупі $\boldsymbol{B}_{\omega}^{\{[n_1)\}}$ напівгрупи $\boldsymbol{B}_{\omega}^{\mathscr{F}_1}$. Також, оскільки композиції $\mathfrak{h}^{-1}\circ\mathfrak{h}$ i $\mathfrak{h}\circ\mathfrak{h}^{-1}$ є тотожними відображеннями напівгруп $\boldsymbol{B}_{\omega}^{\mathscr{F}_1}$ і $\boldsymbol{B}_{\omega}^{\mathscr{F}_2}$, відповідно, то їх звуження $\left(\mathfrak{h}^{-1}\circ\mathfrak{h}\right)\!|_{\boldsymbol{B}_{\omega}^{\{[n_1)\}}}$ i $\left(\mathfrak{h}\circ\mathfrak{h}^{-1}\right)\!|_{\boldsymbol{B}_{\omega}^{\{[n_2)\}}}$ є тотожними відображеннями напівгруп $\boldsymbol{B}_{\omega}^{\{[n_1)\}}$ i $\boldsymbol{B}_{\omega}^{\{[n_2)\}}$, відповідно, а отже, звуження $\mathfrak{h}|_{\boldsymbol{B}_{\omega}^{\{[n_1)\}}}\colon \boldsymbol{B}_{\omega}^{\{[n_1)\}}\rightarrow\boldsymbol{B}_{\omega}^{\{[n_2)\}}$ i $\mathfrak{h}^{-1}|_{\boldsymbol{B}_{\omega}^{\{[n_2)\}}}\colon \boldsymbol{B}_{\omega}^{\{[n_1)\}}\rightarrow\boldsymbol{B}_{\omega}^{\{[n_1)\}}$ ізоморфізму $\mathfrak{h}$ і до нього оберненого $\mathfrak{h}^{-1}$ на напівгрупи $\boldsymbol{B}_{\omega}^{\{[n_1)\}}$ і $\boldsymbol{B}_{\omega}^{\{[n_2)\}}$, відповідно, є ізоморфізмами цих напівгруп.

Очевидно, що при ізоморфізмі $h\colon S\to T$ напівгруп $S$ i $T$ твірні елементи напівгрупи $S$ відоб\-ражаються у твірні елементи напівгрупи $T$. Тоді з того, що $(0,1)$ i $(1,0)$~--- твірні елементи бі\-цик\-ліч\-ного моноїда $\boldsymbol{B}_{\omega}$ i $(0,1)\cdot(1,0)=(0,0)$~--- одиниця в $\boldsymbol{B}_{\omega}$ та $(1,0)\cdot(0,1)=(1,1)$~--- ідемпотент бі\-цик\-лічного моноїда, який відмінний від одиниці, випливає, що тотожне відображення біциклічного моноїда та лише воно є ізоморфізмом з $\boldsymbol{B}_{\omega}$ на $\boldsymbol{B}_{\omega}$. З аналогічних міркувань i теореми~4~\cite{Gutik-Mykhalenych-2020} випливає, що для довільного числа $n\in\omega$ єдиний ізоморфізм $\mathfrak{f}\colon \boldsymbol{B}_{\omega}\to \boldsymbol{B}_{\omega}^{\{[n)\}}$ визначається за формулою
\begin{equation*}
\mathfrak{f}(i,j)=(i,j,[n)), \qquad i,j\in\omega,
\end{equation*}
а отже, звуження $\mathfrak{h}|_{\boldsymbol{B}_{\omega}^{\{[n_1)\}}}\colon \boldsymbol{B}_{\omega}^{\{[n_1)\}}\rightarrow\boldsymbol{B}_{\omega}^{\{[n_2)\}}$ ізоморфізму $\mathfrak{h}\colon \boldsymbol{B}_{\omega}^{\mathscr{F}_1}\rightarrow\boldsymbol{B}_{\omega}^{\mathscr{F}_2}$ визначається за формулою
\begin{equation*}
\mathfrak{h}|_{\boldsymbol{B}_{\omega}^{\{[n_1)\}}}(i,j,[n_1))=(i,j,[n_2)), \qquad i,j\in\omega.
\end{equation*}

Позаяк відображення $\mathfrak{h}\colon \boldsymbol{B}_{\omega}^{\mathscr{F}_1}\rightarrow\boldsymbol{B}_{\omega}^{\mathscr{F}_2}$ є ізоморфізмом, то з вище сказаного випливає, що його звуження $\mathfrak{h}\colon \boldsymbol{B}_{\omega}^{{\mathscr{F}_1}_{\geqslant n_1+1}}\rightarrow\boldsymbol{B}_{\omega}^{{\mathscr{F}_2}_{\geqslant n_2+1}}$ є ізоморфізмом. Далі аналогічно, як викладено вище, ін\-дук\-цією доводимо, що для довільного натурального числа $k$ такого, що $[n_1+k)\in \mathscr{F}_1$ звуження $\mathfrak{h}|_{\boldsymbol{B}_{\omega}^{\{[n_1+k)\}}}\colon \boldsymbol{B}_{\omega}^{\{[n_1+k)\}}\rightarrow\boldsymbol{B}_{\omega}^{\{[n_2+k)\}}$ ізоморфізму $\mathfrak{h}\colon \boldsymbol{B}_{\omega}^{\mathscr{F}_1}\rightarrow\boldsymbol{B}_{\omega}^{\mathscr{F}_2}$ визначається за формулою
\begin{equation*}
\mathfrak{h}|_{\boldsymbol{B}_{\omega}^{\{[n_1+k)\}}}(i,j,[n_1+k))=(i,j,[n_2+k)), \qquad i,j\in\omega.
\end{equation*}
Звідси випливає, що сім'ї $\mathscr{F}_1$ і $\mathscr{F}_2$ рівнопотужні.
\end{proof}

З доведення імплікації \eqref{theorem-3.9(1)}$\Rightarrow$\eqref{theorem-3.9(2)} теореми~\ref{theorem-3.9} випливає наслідок~\ref{corollary-3.10}, який описує структуру ізоморфізмів з напівгрупи $\boldsymbol{B}_{\omega}^{\mathscr{F}_1}$ у напівгрупу $\boldsymbol{B}_{\omega}^{\mathscr{F}_2}$.

\begin{corollary}\label{corollary-3.10}
Нехай $\mathscr{F}_1$ і $\mathscr{F}_2$ --- ${\omega}$-замкнені сім'ї індуктивних непорожніх підмножин у $\omega$, $n_1=\min\displaystyle\bigcup\mathscr{F}_1$ i $n_2=\min\displaystyle\bigcup\mathscr{F}_2$. Якщо напівгрупи $\boldsymbol{B}_{\omega}^{\mathscr{F}_1}$ і $\boldsymbol{B}_{\omega}^{\mathscr{F}_2}$ ізоморфні, то ізоморфізм $\mathfrak{h}\colon \boldsymbol{B}_{\omega}^{\mathscr{F}_1}\rightarrow\boldsymbol{B}_{\omega}^{\mathscr{F}_2}$ визначається за формулою
\begin{equation*}
\mathfrak{h}|_{\boldsymbol{B}_{\omega}^{\{[n_1+k)\}}}(i,j,[n_1+k))=(i,j,[n_2+k)), \qquad i,j\in\omega,
\end{equation*}
для кожного цілого числа $k$ такого, що $[n_1+k)\in \mathscr{F}_1$.
\end{corollary}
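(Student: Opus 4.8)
The plan is to notice that the proof of the implication \eqref{theorem-3.9(1)}$\Rightarrow$\eqref{theorem-3.9(2)} of Theorem~\ref{theorem-3.9} is already constructive: it not only shows that $\mathscr{F}_1$ and $\mathscr{F}_2$ are equivalent, but along the way it pins down each value $\mathfrak{h}(i,j,[n_1+k))$. Thus, given an isomorphism $\mathfrak{h}\colon\boldsymbol{B}_{\omega}^{\mathscr{F}_1}\to\boldsymbol{B}_{\omega}^{\mathscr{F}_2}$ with $n_1=\min\bigcup\mathscr{F}_1$ and $n_2=\min\bigcup\mathscr{F}_2$, I would reorganize that argument as a base step ($k=0$) followed by an induction on $k$, and simply read off the asserted formula.

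For the base step I would use that $(0,0,[n_1))$ and $(0,0,[n_2))$ are the identities of the two semigroups, so $\mathfrak{h}(0,0,[n_1))=(0,0,[n_2))$. Since the identity of $\boldsymbol{B}_{\omega}^{\{[n_1)\}}$ coincides with the global identity, Proposition~\ref{proposition-2.6} applied to $\mathfrak{h}$ gives $\mathfrak{h}(\boldsymbol{B}_{\omega}^{\{[n_1)\}})\subseteq\boldsymbol{B}_{\omega}^{\{[n_2)\}}$, and applied to $\mathfrak{h}^{-1}$ it gives $\mathfrak{h}^{-1}(\boldsymbol{B}_{\omega}^{\{[n_2)\}})\subseteq\boldsymbol{B}_{\omega}^{\{[n_1)\}}$; sandwiching then forces $\mathfrak{h}(\boldsymbol{B}_{\omega}^{\{[n_1)\}})=\boldsymbol{B}_{\omega}^{\{[n_2)\}}$. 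The formula itself comes from the rigidity of the bicyclic semigroup: as $\mathfrak{f}_{n}\colon\boldsymbol{B}_\omega\to\boldsymbol{B}_{\omega}^{\{[n)\}}$, $(i,j)\mapsto(i,j,[n))$, is an isomorphism and the only isomorphism of $\boldsymbol{B}_\omega$ onto itself is the identity, the composite $\mathfrak{f}_{n_2}^{-1}\circ\big(\mathfrak{h}|_{\boldsymbol{B}_{\omega}^{\{[n_1)\}}}\big)\circ\mathfrak{f}_{n_1}$ is the identity of $\boldsymbol{B}_\omega$, whence $\mathfrak{h}(i,j,[n_1))=(i,j,[n_2))$ for all $i,j\in\omega$.

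For the inductive step I would observe that, as a set, $\boldsymbol{B}_{\omega}^{\mathscr{F}_1}$ is the disjoint union of $\boldsymbol{B}_{\omega}^{\{[n_1)\}}$ and the retract $\boldsymbol{B}_{\omega}^{{\mathscr{F}_1}_{\geqslant n_1+1}}$, and likewise for $\mathscr{F}_2$. Since the base step shows that the global bijection $\mathfrak{h}$ carries $\boldsymbol{B}_{\omega}^{\{[n_1)\}}$ onto $\boldsymbol{B}_{\omega}^{\{[n_2)\}}$, it must carry the complementary subsemigroup $\boldsymbol{B}_{\omega}^{{\mathscr{F}_1}_{\geqslant n_1+1}}$ onto $\boldsymbol{B}_{\omega}^{{\mathscr{F}_2}_{\geqslant n_2+1}}$, so its restriction there is again an isomorphism between two semigroups of the same type, now with minimal indices $n_1+1$ and $n_2+1$ (recall that, by Lemma~\ref{lemma-2.3} and Remark~\ref{remark-2.4}, each family is a contiguous block of sets $[k)$). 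Applying the base step to this restriction yields $\mathfrak{h}(i,j,[n_1+1))=(i,j,[n_2+1))$, and iterating over all $k$ with $[n_1+k)\in\mathscr{F}_1$ gives $\mathfrak{h}(i,j,[n_1+k))=(i,j,[n_2+k))$, which is precisely the required description of $\mathfrak{h}$.

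The step I expect to be the crux is the base step, and within it the passage from the inclusion $\mathfrak{h}(\boldsymbol{B}_{\omega}^{\{[n_1)\}})\subseteq\boldsymbol{B}_{\omega}^{\{[n_2)\}}$ to equality together with the appeal to the triviality of the automorphism group of the bicyclic semigroup. Once the identification $\mathfrak{h}(i,j,[n_1))=(i,j,[n_2))$ is secured, the inductive reduction is a purely set-theoretic complement argument combined with the fact that the $\boldsymbol{B}_{\omega}^{{\mathscr{F}_i}_{\geqslant n_i+1}}$ are subsemigroups, and the remaining verifications are routine computations with the multiplication of $\boldsymbol{B}_{\omega}^{\mathscr{F}}$.
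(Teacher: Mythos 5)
Your proposal is correct and takes essentially the same route as the paper: Corollary~\ref{corollary-3.10} is there obtained directly from the constructive proof of the implication \eqref{theorem-3.9(1)}$\Rightarrow$\eqref{theorem-3.9(2)} of Theorem~\ref{theorem-3.9}, which, exactly as you outline, combines preservation of the identity, Proposition~\ref{proposition-2.6}, the rigidity of the bicyclic semigroup, and the passage to the complementary subsemigroups $\boldsymbol{B}_{\omega}^{{\mathscr{F}_1}_{\geqslant n_1+1}}$, $\boldsymbol{B}_{\omega}^{{\mathscr{F}_2}_{\geqslant n_2+1}}$ with an induction on $k$. The only cosmetic difference is that you invoke the triviality of the automorphism group of $\boldsymbol{B}_{\omega}$ as a known fact, while the paper re-derives it inline from the generators $(0,1)$ and $(1,0)$.
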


Нагадаємо \cite{Clifford-Preston-1961}, що \emph{автоморфізмом} напівгрупи $S$ називається довільний ізоморфізм з $S$ на $S$.

Оскільки автоморфізми тривіальної напівгрупи тривіальні, тобто є тотожними відображеннями, то з наслідку \ref{corollary-3.10} випливає

\begin{corollary}\label{corollary-3.11}
Для довільної ${\omega}$-замкненої сім'ї $\mathscr{F}$ індуктивних підмножин у $\omega$ кожен автоморфізм напівгрупи $\boldsymbol{B}_{\omega}^{\mathscr{F}}$ є тривіальним відображенням, а отже група авторморфізмів  напівгрупи $\boldsymbol{B}_{\omega}^{\mathscr{F}}$ тривіальна.
\end{corollary}

\end{document}